      \numberwithin{equation}{section}
\theoremstyle{plain}
\newtheorem{theorem}{Theorem}[section]
      \newtheorem{proposition}[theorem]{Proposition}
      \newtheorem{corollary}[theorem]{Corollary}
      \newtheorem{conjecture}[theorem]{Conjecture}
      \newtheorem{definition}[theorem]{Definition}
      \newtheorem{remark}[theorem]{Remark}
\begin{document}

\begin{frontmatter}

\title{On the connection between transient and ballistic behaviours for RWRE}
\runtitle{on the RWRE conjecture}


\author{\fnms{Enrique} \snm{Guerra Aguilar}\ead[label=e1]{eaguerra@mat.puc.cl}\thanksref{t1} }
\thankstext{t1}{Supported by CONICYT FONDECYT Postdoctorado 3180255}
\affiliation{Pontificia Universidad Cat\'{o}lica de Chile}

\runauthor{E. Guerra}

\begin{abstract}
We study the strong form of the ballistic conjecture for random walks in random environments (RWRE). This conjecture asserts that any RWRE which is directionally transient for a nonempty open set of directions satisfies condition $(T)$ (annealed exponential decay for the unlikely exit probability).
Specifically, we introduce a ballisticity condition which is fulfilled as soon as a polynomial condition of degree greater than $d-1$ holds. Under that hypothesis we prove condition $(T)$, which turns this condition into the weakest-known ballisticity assumption. We recall that standard arguments to prove that a ballisticity condition implies directional transience require at least polynomial decay greater than degree $d$. Furthermore, in the one dimensional case we provide an alternative proof which proves the equivalence between transient behaviour and annealed arbitrary decay for the unlikely exit probability, we expect that this new argument might be used in higher dimensions.

\end{abstract}

\begin{keyword}[class=MSC]
\kwd[Primary ]{60K37}
\kwd[; secondary ]{82D30}
\end{keyword}

\begin{keyword}
\kwd{Random walk in random environment}
\kwd{Ballistic conjecture}
\end{keyword}

\end{frontmatter}

\section{Introduction}
In the higher dimensional case $d\geq2$, it is conjectured that any $d-$ dimensional random walk in an i.i.d. uniform elliptic random environment (RWRE) which is transient along an open set of directions, is also ballistic. Alongside, the so-called condition $(T)$ introduced by Sznitman in \cite{Sz01} has shown to be an important assumption in order to quantify ballistic regime. For instance, assuming condition $(T)$ one finds that ballistic behaviour, functional central limit theorem and large deviation estimates are fulfilled (cf. \cite{Sz01} and \cite{Sz02}). Moreover, condition $(T)$ is equivalent to transient behaviour in the one dimensional case.

\noindent
Somehow, condition $(T)$ tries to quantify the gap needed in order to prove the conjecture. Indeed, the strong form of the previous conjecture is expected: "transience along an open set of directions implies condition (T)". Our main objective is trying to interpolate between these two behaviours: directional transience along an open set and condition $(T)$.
We introduce the weakest-known ballisticity condition and we prove that under that condition, the stronger condition (T) is fulfilled. Our weak ballisticity condition will be satisfied under a polynomial condition of degree $d-1$.

\noindent
We recall that previous related results can be found in \cite{BDR14} where the authors proved a similar result for a polynomial decay of degree at least $15d+5$, and in \cite{GVV19} for a degree of $9d$. To the best of my knowledge, the most standard proof to show that a ballisticity condition implies transience requires a polynomial decay of degree at least $d$ (cf. Lemma 3.38 in \cite{DR14}). On the other hand, our proof makes an exhaustive use of the previous techniques developed in \cite{BDR14} and \cite{GVV19}, and it is likely to expect that the underlying procedure cannot be reproduced to improve on the starting decay. Trying to solve in part this problem, we provide an alternative proof in the one dimensional case which might be extended to higher dimensional setting.

\smallskip

Let us introduce the standard setting in order to properly explain the previous informal discussion. We let the underlying dimension $d\geq1$, and notice that the environment prescribes at each site in $\mathbb Z^d$ the transitions governing the evolution of the random particle. Specifically we let $\kappa\in (0,1/(2d)]$ and define the simplex:
\begin{equation}
\label{simplex}
\mathcal P_\kappa:=\left\{z\in \mathbb R^{2d}: \sum_{i=1}^{2d}z_i=1, \ z_i\geq \kappa \ \forall i\in[1,2d] \right\}.
\end{equation}
We will denote norms $\ell^1$ and $\ell^2$, by $|\ \cdot \ |_1$ and $|\ \cdot \ |_2$, respectively.

\noindent
The set of environments is $\Omega:=\mathcal P_\kappa^{\mathbb Z^{d}}$ and we denote an element $\omega\in \Omega$ in the form $\omega:=\omega(x,e)=\omega(x,\cdot)$, $x\in\mathbb Z^d$, $e\in\mathbb Z^d$,  with $|e|_1=1$. We also use the notation $\omega_x:=\omega(x,\cdot)$, for $x\in\mathbb Z^d$.

\smallskip
For the time being, assume a given ergodic probability measure $\mathbb P$ on the $\sigma-$ algebra $\mathfrak F_{\Omega}$, generated by cylinder sets in $\Omega$. Let $\omega\in\Omega$ and $x\in \mathbb Z^d$ and define the \textit{quenched law} $P_{x,\omega}$ as the probability measure of the Markov chain $(X_n)_{n\geq 0}$ with state space in $\mathbb Z^d$ starting from $x$ and stationary transition probabilities to nearest neighbour sites, given by the environment, i.e.
\begin{align*}
&P_{x,\omega}[X_0=x]=1 \mbox{  and}\\
&P_{x,\omega}\left[X_{n+1}=X_n+e|X_n\right]=\omega(X_n,e),\,\,\mbox{ for }e \in \mathbb Z^d \,\,\mbox{ with }\,\, |e|=1.
\end{align*}

\noindent
We then define for $x\in \mathbb Z^d$ the \textit{annealed} probability measure $P_x$ via the semidirect product $P_x:=\mathbb P\times P_{x,\omega}$ on $\Omega\times (\mathbb Z^d)^{\mathbb N}$ endowed with its canonical $\sigma-$ algebra. With a little abuse of notation, we will denote as well by $P_x$, the marginal law of the process $(X_n)_{n\geq0}$ under $P_x$ itself.  We use symbols $(\mathcal F_n)_{n\geq0}$ and $\mathcal F$ to indicate the natural filtration and $\sigma-$ algebra of the random walk process, respectively.

\smallskip
We study the RWRE in strong mixing random environments, following certain extension of X. Guo in \cite{Gu14}. For a universal set $U$, and a subset $A\subset U$ we write $U\setminus A$ the complement of $A$, and we simply write this by $A^c$ whenever $U$ is clear from the context.

We use the notation $|\,\cdot\,|_1$ and $|\,\cdot\,|_2$ to denote the $\ell_1$ and $\ell_2$-distance on $\mathbb R^d$ respectively; and furthermore, for $A, B\subset \mathbb Z^d$, $i\in \{1,2\}$, the notation $d_i(A,B)$ stands for the canonical $\ell_i$-distance between sets $A,\, B$, i.e. $d_i(A,B):=\inf\{|x-y|_i,\, x\in A, y\in B\}$.

We first define a Markovian field on the lattice $\mathbb Z^d$.
\begin{definition}{Markovian field on $\mathbb Z^d$}\label{defmark}
For $r\geq 1$ and $V \subset \mathbb{Z}^d$, let $\partial^r V=\{z\in V^c: d_1(z, V)\leq r\}$ be the $r-$ boundary of the set $V$. To simplify notation we will also write $\partial^1 V = \partial V$ for sets $V \subset \mathbb{Z}^d$. A random environment $(\mathbb P, \mathfrak{F}_{\Omega})$ on $\mathbb Z^d$ is called $r$-Markovian if for any finite $V\subset \mathbb Z^d$, $\mathbb P-$ a.s.
\begin{equation*}
  \mathbb P[(\omega_{x})_{x\in V}\in \cdot|\mathfrak{F}_{V^c}]=\mathbb P[(\omega_x)_{x\in V}\in \cdot|\mathfrak{F}_{\partial ^r V}],
\end{equation*}
where $\mathfrak{F}_{\Lambda}=\sigma(\omega_x, \, x\in \Lambda)$.
\end{definition}
We then introduce the strong mixing assumption.
\begin{definition}{Strong mixing environments}\label{defsma}
 Let $C$ and $g$ be positive real numbers. We will say that an $r$ -Markovian field $(\mathbb P, \mathfrak{F}_{\Omega} )$ satisfies the strong mixing condition \textbf{(SM)}$_{C,g}$ if for all finite subsets $\Delta\subset V \subset \mathbb Z^d$ with $d_1(\Delta, V^c)\geq r$, and $A\subset V^c$,
\begin{equation}
\label{sma}
\frac{d\mathbb P[(\omega_x )_{x\in \Delta}\in \cdot | \eta]}{d \mathbb P[(\omega_x )_{x\in \Delta}\in \cdot | \eta']}\leq \exp\left( C \sum_{x\in \partial^r \Delta, y \in \partial^r A}e^{-g|x-y|_1}\right)
\end{equation}
for $\mathbb P-$ a.s. all pairs of configurations $\eta, \,\eta'\in \mathcal P_{\kappa}^{\mathbb Z^d} $ which agree over the set $V^c \backslash A$. Here we have used the notation
\begin{equation*}
\mathbb P[(\omega_x )_{x\in \Delta}\in \cdot | \eta]=\mathbb P[(\omega_x )_{x\in \Delta}\in \cdot |\mathfrak{F}_{V^c}]|_{(\omega_x)_{x\in V^c}=\eta}.
\end{equation*}

\end{definition}

\smallskip

\vspace{0.5ex}
We introduce the so-called ballisticity conditions, nevertheless we first need to establish some further terminology. We define the unit sphere $\mathbb S^{d-1}$ by
$$
\mathbb S^{d-1}:=\{x\in \mathbb R^d:\ \sum_{i=1}^{d} x_i^2=1\}.
$$
We then define for $L\in \mathbb R$ and $\ell\in \mathcal S^{d-1}$, the following $\left(\mathcal F_n\right)_{n\geq0}$ -stopping times:
\begin{gather}
\nonumber
T_L^\ell:=\inf\{n\geq 0:\,X_n\cdot \ell\geq L \}\hspace{1.5ex}\mbox{and}\\
\label{stopleftright}
\widetilde T_{L}^\ell:=\inf\{n\geq0 \, X_n\cdot \ell \leq L\}.
\end{gather}
We define classic Sznitman $T-$ types of ballisticity conditions.
\begin{definition}\label{defTs}
Let $\gamma\in(0,1]$ and $\ell\in \mathbb S^{d-1}$. We say that condition $(T^\gamma)|\ell$ holds, if for each $b>0$ there exists some neighbourhood $\mathcal U_\ell$ of $\ell$ in $\mathbb S^{d-1}$, such that for each $\ell' \in \mathcal U_\ell$,
\begin{equation}\label{defTgammaeq}
\limsup_{\substack{L\rightarrow\infty}}L^{-\gamma}\log P_0\left[\widetilde T_{-bL}^{\ell'}<T_L^{\ell'}\right]<0
\end{equation}
is fulfilled. We further define condition $(T)|\ell$ as simply $(T^1)|\ell$, and condition $(T')|\ell$ as the requirement that $(T^\gamma)|\ell$ is fulfilled for each $\gamma\in (0,1)$.
\end{definition}
We introduce a priori weaker polynomial ballisticity conditions and transient behavior as follows
\begin{definition}\label{defPOL}
Let $\ell\in\mathbb S^{d-1}$ and $M>0$, we say that the RWRE satisfies condition $(\mathcal P^M)|\ell$ if for each $b>0$ there exists some neighbourhood $\mathcal U_\ell$ of $\ell$ in $\mathbb S^{d-1}$, such that for each $\ell' \in \mathcal U_\ell$,
\begin{equation}\label{defPoleq}
\lim_{\substack{L\rightarrow\infty}} L^{M}P_0\left[\widetilde T_{-bL}^{\ell'}<T_L^{\ell'}\right]=0.
\end{equation}
Furthermore, we say that the RWRE is transient along $\ell$, whenever
\begin{equation}\label{deftraneq}
P\left[\lim_{n\rightarrow\infty}X_n\cdot\ell=\infty\right]=1.
\end{equation}
\end{definition}
We now introduce the definition of ballistic asymptotic behaviour:
\begin{definition}{Non-vanishing limiting velocity}\label{defbullet}
Let $\ell\in \mathbb S^{d-1}$. We say that the RWRE satisfies a ballistic strong law of large numbers along direction $\ell$, if there exists a deterministic non-vanishing velocity $v\in \mathbb R^d$ with $v\cdot \ell>0$ such that $P_0-$ a.s.
\begin{equation}\label{velocity}
\lim_{\substack{n\rightarrow\infty}}\frac{X_n}{n}=v.
\end{equation}
\end{definition}
 We consider the direct product case $\mathbb P=\mu^{\otimes\mathbb Z^d}$ for certain fixed probability $\mu$ on the canonical $\sigma-$ algebra for set $\mathcal P_{\kappa}$. We call this environmental framework an i.i.d. random environment. Then the fundamental conjecture can be settled as the following assertion.
\begin{conjecture}[$d\geq2$]\label{conjec1}
Let $\ell\in \mathbb S^{d-1}$, then for any random walk in an i.i.d. uniform elliptic random environment the following assertions are equivalents:
\begin{enumerate}[(i)]
\item Directional transience along each direction in a nonempty open set $\mathcal U_{\ell}\subset \mathbb S^{d-1}$ is fulfilled, with $\ell\in\mathcal U_{\ell}$.
\item A ballistic strong law of large numbers along direction $\ell$ with velocity $v\in \mathbb R^d$ holds.
\end{enumerate}
\end{conjecture}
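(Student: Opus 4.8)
Although Conjecture~\ref{conjec1} is open, the structure of an argument is clear, and I sketch the route I would follow. The implication (ii) $\Rightarrow$ (i) is immediate and does not even use the i.i.d.\ assumption: if $X_n/n\to v$ $P_0$-a.s.\ with $v\cdot\ell>0$, then $X_n\cdot\ell\to+\infty$ a.s., and since $\ell'\mapsto v\cdot\ell'$ is continuous there is a neighbourhood $\mathcal U_\ell$ of $\ell$ on which $v\cdot\ell'>0$, so \eqref{deftraneq} holds for every $\ell'\in\mathcal U_\ell$; the asymptotic direction being deterministic, no uniformity issue arises.

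The substance lies in (i) $\Rightarrow$ (ii), which I would factor through the ballisticity hierarchy in three steps. \emph{Step 1:} promote qualitative directional transience on the open set $\mathcal U_\ell$ to the quantitative polynomial control $(\mathcal P^M)|\ell$ of Definition~\ref{defPOL} for some exponent $M>d-1$, by a renormalization/renewal argument that feeds forward crossing estimates of slabs of width of order $\log L$ to bound the atypical backtracking across a slab of width $L$. \emph{Step 2:} invoke the weak ballisticity criterion established in this paper — which is implied by $(\mathcal P^M)|\ell$ as soon as $M>d-1$ — to deduce that condition $(T)|\ell$ of Definition~\ref{defTs} holds. \emph{Step 3:} in an i.i.d.\ uniformly elliptic environment condition $(T)|\ell$ yields, by Sznitman \cite{Sz01, Sz02}, a strong law of large numbers with deterministic non-vanishing velocity $v$ such that $v\cdot\ell>0$, i.e.\ Definition~\ref{defbullet} (Step~3 in fact also gives the functional CLT and large-deviation bounds, but only \eqref{velocity} is required here).

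The decisive obstacle is Step 1. The standard renormalization estimates (for instance Lemma~3.38 in \cite{DR14}) run in the opposite direction and, even then, demand polynomial decay of degree at least $d$ merely to conclude transience; there is presently no mechanism that extracts any power-law decay of the backtracking probability $P_0[\widetilde T_{-bL}^{\ell'}<T_L^{\ell'}]$ from transience alone. Closing this gap is exactly the content of the conjecture, and it is why the present paper proves the implication ``$(\mathcal P^M)|\ell$ with $M>d-1$ $\Rightarrow$ $(T)$'' rather than ``transience $\Rightarrow$ $(T)$''. In dimension one the obstruction disappears: directional transience is equivalent to $P\left[\lim_n X_n=+\infty\right]=1$, which through Solomon's explicit criterion forces exponential decay of the atypical exit probability, so one reaches condition $(T)$ directly without renormalization; the hope, stated in the abstract, is that a localized analogue of this one-dimensional argument persists for $d\ge2$, which would supply Step 1 in general and thereby complete the equivalence.
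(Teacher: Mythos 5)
The statement is labelled a conjecture and the paper contains no proof of it, so there is nothing to compare against; you correctly recognised this and did not pretend otherwise. Your (ii)~$\Rightarrow$~(i) direction is fine, and your Steps 2--3 faithfully reproduce the hierarchy the paper actually establishes for its theorems: $(\mathcal P^{d-1})|\ell \Rightarrow (\mathcal W_{c,M})|\ell \Rightarrow (T')|\ell$, upgraded to $(T)|\ell$ in the i.i.d.\ case via the main result of \cite{GR18}, and then a ballistic law of large numbers from \cite{Sz01,Sz02}. You also correctly isolate the genuinely open point, namely Step~1: there is at present no mechanism that extracts any power-law decay of $P_0[\widetilde T_{-bL}^{\ell'}<T_L^{\ell'}]$ from directional transience alone, and this is precisely why the statement remains a conjecture and why the paper's contribution is to weaken the \emph{input} end of the implication rather than to close the loop from transience. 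Two minor corrections to your commentary: in $d=1$ the argument the paper develops in Section~\ref{seconedim} does not proceed through Solomon's criterion but through a finite-state approximation of the environment together with a Markov-chain/coupling bound (it is this finite approximation structure, not Solomon's explicit formula, that the author hopes might survive in higher dimension); and the polynomial threshold the paper states in Theorem~\ref{mainth1} is $(\mathcal P^{d-1})|\ell$, i.e.\ degree exactly $d-1$, although the abstract loosely says ``greater than $d-1$'' --- your phrasing ``$M>d-1$'' is therefore the safer but not the sharpest reading.
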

Notice that by Theorem 3.6 in \cite{Sz01}, the following conjecture is indeed a stronger form.
\begin{conjecture}\label{conjec2}
Let $\ell\in \mathbb S^{d-1}$, then for any random walk in a strong mixing uniform elliptic random environment the following assertions are equivalents:
\begin{enumerate}[(i)]
\item Directional transience along an open set $\mathcal U_{\ell}\subset \mathbb S^{d-1}$ is fulfilled, with $\ell\in\mathcal U_{\ell}$.
\item $(T)|\ell$ is fulfilled.
\end{enumerate}
\end{conjecture}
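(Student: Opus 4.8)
The plan is to establish Conjecture~\ref{conjec2} by treating its two implications separately; one is classical, and the other contains the real difficulty, which in general dimension I would only be able to overcome after replacing (i) by a quantitative strengthening.

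\emph{The implication $(ii)\Rightarrow(i)$.} This I treat as essentially known. If $(T)|\ell$ holds there are $b>0$, a neighbourhood $\mathcal U_\ell$, and constants $c,C>0$ with $P_0[\widetilde T^{\ell'}_{-bL}<T^{\ell'}_L]\le Ce^{-cL}$ for all $\ell'\in\mathcal U_\ell$ and $L$ large. Summing over $L\in\mathbb N$ and applying Borel--Cantelli under $P_0$ shows that a.s.\ the walk makes only finitely many macroscopic backtracks, so $\limsup_n X_n\cdot\ell'=+\infty$; combined with the directional zero--one law available in the mixing setting (the extension of the Zerner--Merkl law, as used in \cite{Gu14}), this yields $X_n\cdot\ell'\to+\infty$ $P_0$--a.s.\ for every $\ell'\in\mathcal U_\ell$, i.e.\ directional transience on an open set.

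\emph{The implication $(i)\Rightarrow(ii)$.} Here the plan has three stages. Stage~1: from transience on the open set $\mathcal U_\ell$ extract a genuinely quantitative statement, namely that the polynomial condition $(\mathcal P^M)|\ell$ of Definition~\ref{defPOL} holds for some exponent $M>d-1$; the new ballisticity condition announced in the abstract is designed precisely so that it is implied by such a polynomial decay while still serving as a usable seed for renormalisation. Stage~2: feed this seed into a multiscale renormalisation scheme in the spirit of Sznitman's effective criterion and of the atypical-quadrant estimates of \cite{BDR14} and \cite{GVV19}, propagating the estimate from a base scale $L_0$ to scales $L_{k+1}\approx L_k^{a}$ with $a>1$; the bookkeeping of the error terms closes exactly when the starting degree exceeds $d-1$, and the output is a stretched-exponential bound, i.e.\ $(T^\gamma)|\ell$ for some $\gamma\in(0,1)$. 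Stage~3: upgrade $(T^\gamma)|\ell$ to $(T)|\ell$ via the known equivalence of the conditions $(T^\gamma)|\ell$, $\gamma\in(0,1]$, in this setting (cf.\ \cite{Gu14}); once $(T)|\ell$ holds, a ballistic strong law of large numbers as in Definition~\ref{defbullet} follows from the regeneration structure that $(T)|\ell$ supplies, which closes the equivalence.

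\emph{Main obstacle, and the one-dimensional route.} The crux is Stage~1: transience alone carries no a priori rate, and the standard ways of manufacturing one — e.g.\ the argument behind Lemma 3.38 in \cite{DR14} — only reach degree $d$, while the renormalisation machinery of Stage~2 appears structurally unable to start below degree $d-1$; so in $d\ge2$ I would only be able to prove the conditional statement ``$(\mathcal P^M)|\ell$ for some $M>d-1$ implies $(T)|\ell$'', which narrows, but does not close, the gap. In dimension $d=1$, by contrast, I would bypass renormalisation altogether: using the explicit potential description of the environment and a direct analysis of backward excursions — each macroscopic unit of backtracking being suppressed by an independent, or under the mixing assumption weakly dependent, multiplicative factor — transience can be shown to force $(\mathcal P^M)|\ell$ for \emph{every} $M$, i.e.\ annealed decay faster than any polynomial, and the classical one-dimensional equivalences then give $(T)|\ell$. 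The value of this alternative argument is that it reverses the usual order of events (first arbitrary polynomial decay, only afterwards exponential decay), and the hope I would flag is that the same reversal can eventually be organised in higher dimensions.
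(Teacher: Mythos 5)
The statement you were asked to prove is a \emph{conjecture}, and the paper treats it as such: it does not prove Conjecture~\ref{conjec2} in full generality, and neither does your proposal. Your own text makes the gap explicit. In Stage~1 you need to pass from directional transience along an open set --- a purely qualitative statement carrying no rate whatsoever --- to a quantitative polynomial estimate $(\mathcal P^M)|\ell$ with $M>d-1$. You acknowledge that you have no mechanism to manufacture such a rate from transience alone, and that the standard arguments (e.g.\ Lemma~3.38 of \cite{DR14}) run in the opposite direction. That missing step is not a technicality: it is precisely the content of the conjecture for $d\ge 2$, and it is the reason the paper is formulated around the weaker, unconditional Theorem~\ref{mainth1} (a polynomial or $(\mathcal W_{c,M})|\ell$ seed implies $(T)$) rather than around a proof of Conjecture~\ref{conjec2}. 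Stages~2 and~3 of your plan, and the implication $(ii)\Rightarrow(i)$, are essentially what the paper carries out and cites; but the conjunction of all three stages still only yields the conditional statement ``$(\mathcal P^M)|\ell$ with $M>d-1$ implies $(T)|\ell$,'' which narrows the gap (this is exactly what Theorem~\ref{mainth1} accomplishes) without closing it.

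On the one-dimensional case, where the equivalence genuinely holds, your route and the paper's diverge. You invoke the explicit one-dimensional potential and a backward-excursion analysis, i.e.\ the classical Solomon-type machinery combined with the one-dimensional effective criterion; the paper's first proof of Corollary~\ref{coronedim} likewise falls back on Proposition~2.6 of \cite{Sz01} and is intentionally brief. The paper's second, novel proof is quite different: it approximates the single-site law by a finitely-supported law, decomposes the backtracking probability by the last visit to an intermediate strip, and exploits a symmetry/coupling with the simple random walk to produce an i.i.d.\ multiplicative factor of size at most $\lambda_1$ per block, giving exponential decay by induction. The point of that alternative argument is precisely that it does not use the potential (which has no higher-dimensional analogue), so it is advanced as a candidate template for $d\ge2$. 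Your one-dimensional argument is correct as far as it goes, but it leans on structures that are specific to $d=1$ and so does not carry the same prospective value. In summary: the proposal correctly identifies the obstruction and correctly describes the conditional reduction, but it does not, and cannot on present knowledge, prove Conjecture~\ref{conjec2} for $d\ge2$.
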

We remark that by the main result in \cite{GR18} and Theorem 1.8 in \cite{GVV19}, the condition $(T)|\ell$ above can be replaced by $(\mathcal P^M)|\ell$, with $M>9d$. Indeed, this work proves that a further weaker decay can be considered as equivalent to condition $(T)|\ell$. For a set $A\subset \mathbb Z^d$ we introduce $(\mathcal F_n)_{n\geq0}-$ stopping times
\begin{align}
\label{stopexitset}
T_A:=&\inf\{n\geq0:\ X_n\notin A\},\hspace{2ex}\mbox{along with}\\
\nonumber
H_A:=&\inf\{n\geq0:\ X_n\in A\},
\end{align}
which are the first exit and entrance time to set $A$.
\begin{definition}\label{defweakcond}
Let $c>0$, $L>0$, $\ell\in\mathbb S^{d-1}$ and $R$ be a rotation of $\mathbb R^d$ such that with $R(e_1)=\ell$. Define blocks
\begin{align}
&\widetilde B_{1}(c,L)=R\left([0,L]\times[0,3cL]^{d-1}\right)\cap\mathbb Z^d,\\
&B_{2}(c,L)=R\left((-L,(1+(1/11))L)\times (-cL,4cL)\right)\cap\mathbb Z^d.
\end{align}
We also define the frontal part of the $B_2(c,L)-$ boundary, via
$$
\partial^+B_{2,L}:=\partial B_2(c,L)\cap\{z: z\cdot \ell\geq (1+(1/11))L\}.
$$
We say that condition $(\mathcal W_{c,M})|\ell$ is satisfied if there exist some $c>0$, $M>1/\lambda_1$, such that
\begin{equation}\label{defweakcondeq}
\mathbb E\left[\sup_{x\in\widetilde B_{1}(c,M)}P_{x,\omega}\left[X_{T_{B_{2}(c,M)}}\notin \partial ^+B_2(c,M)\right]\right]<\lambda_1
\end{equation}
holds, where $\lambda_1<1$ is an absolute positive constant depending only on $d,\ \kappa,\ g,\ C$ and $r$.
\end{definition}
It is not hard to prove that this condition is implied by $(\mathcal P^{d-1})|\ell$.
The main theorem of this article is the following theorem.
\begin{theorem}\label{mainth1}
Assume condition $(\mathcal W_{c,M})|\ell$ for some constants $c>0$ and $M>1/\lambda_1$, then condition $(T')|\ell$ holds. Furthermore, if in addition the random environment has an i.i.d. structure condition $(T)|\ell$ is satisfied. Furthermore, condition $(\mathcal W_{c,M})|\ell$ is implied by $(\mathcal P^{d-1})|\ell$.
\end{theorem}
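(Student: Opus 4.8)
The plan is to prove the three assertions in the logical order: first $(\mathcal P^{d-1})|\ell\Rightarrow(\mathcal W_{c,M})|\ell$, which produces a single‑scale ``seed'' estimate; then $(\mathcal W_{c,M})|\ell\Rightarrow(T')|\ell$ by a multiscale renormalisation modelled on \cite{BDR14} and \cite{GVV19}, for which $(\mathcal W_{c,M})|\ell$ is exactly the input required; and finally the upgrade to $(T)|\ell$ in the i.i.d.\ case. The guiding idea is that $(\mathcal W_{c,M})|\ell$ is calibrated to be precisely what the renormalisation consumes: the proportions $3c,-c,4c$ and $1+\tfrac1{11}$ in $\widetilde B_1(c,L)\subset B_2(c,L)$, together with the fact that $\lambda_1$ is fixed in advance in terms of $d,\kappa,g,C,r$, are exactly what is needed for the induction below to close starting only from $q_0<\lambda_1$. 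Note that no directional transience is assumed or used as a separate input: it is a by‑product of the $(T')|\ell$ that we obtain.

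For the implication $(\mathcal P^{d-1})|\ell\Rightarrow(\mathcal W_{c,M})|\ell$ I would argue as follows. The event in \eqref{defweakcondeq} is a union over the $2d-1$ non‑frontal faces of $B_2(c,M)$. For the back face $\{z\cdot\ell\le -M\}$ I would condition on the position at the stopping time $\widetilde T^{\ell}_{-1}$: by the strong Markov property the corresponding probability, started anywhere in $\widetilde B_1(c,M)$, is at most $\sup_Y P_{Y,\omega}\bigl[\widetilde T^{\ell}_{-M}<T^{\ell}_{(1+1/11)M}\bigr]$, where $Y$ ranges over the bounded slab $\{Y\in B_2(c,M):\,-2<Y\cdot\ell\le -1\}$ — a set of only $O_c(M^{d-1})$ sites. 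Taking the annealed expectation, using stationarity of $\mathbb P$, and applying $(\mathcal P^{d-1})|\ell$ at scale $\asymp M$ with a parameter $b$ bounded away from $0$ and $\infty$, each term is $o(M^{-(d-1)})$, so the back‑face contribution is $O_c(M^{d-1})\,o(M^{-(d-1)})=o_M(1)$. Each lateral face is treated by the same first‑hitting reduction together with the classical tilting trick: exiting, say, the face on which a transverse coordinate reaches $4cM$ before the frontal face forces $\widetilde T^{\ell'}_{-bL}<T^{\ell'}_L$ for a unit vector $\ell'$ obtained by tilting $\ell$ slightly toward that face, with $L\asymp M$ and $b$ bounded below; taking $c$ large keeps the tilt small enough that $\ell'$ lies in the neighbourhood of $\ell$ supplied by $(\mathcal P^{d-1})|\ell$. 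Summing the $2d-1$ contributions and then letting $M$ be large (the only constraint being $M>1/\lambda_1$) pushes the left side of \eqref{defweakcondeq} below $\lambda_1$. The crucial point is that conditioning on the first hitting of the relevant level lowers by one the dimension of the set one must sum over, so the combinatorial cost is $M^{d-1}$ rather than the $M^{d}$ of the naive union bound over $\widetilde B_1(c,M)$: this is exactly why degree $d-1$, and not degree $d$, is enough.

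For $(\mathcal W_{c,M})|\ell\Rightarrow(T')|\ell$ I would run the renormalisation of \cite{BDR14,GVV19}. Set $L_0=M$, $B_0=B_2(c,M)$, $\widetilde B_0=\widetilde B_1(c,M)$, let the scales $L_{k+1}$ grow with $L_{k+1}/L_k\to\infty$ but $\log L_{k+1}/\log L_k\to1$, let $B_k\supset\widetilde B_k$ be the corresponding rescaled boxes, and put $q_k=\mathbb E\bigl[\sup_{x\in\widetilde B_k}P_{x,\omega}[\,X_{T_{B_k}}\text{ leaves }B_k\text{ through a non-frontal face}\,]\bigr]$, so that $q_0<\lambda_1$ by hypothesis. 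The core is the recursive estimate $q_{k+1}\le C_1\,L_{k+1}^{C_2 d}\,(C_3\,q_k)^{\lfloor C_4 L_{k+1}/L_k\rfloor}+\varepsilon_k$, established exactly as in \cite{BDR14,GVV19}: a non‑frontal exit of $B_{k+1}$ (after reducing its lateral part to a backward one by tilting, as in the previous paragraph) forces the walk to traverse backward some $\asymp L_{k+1}/L_k$ layers of translated copies of $B_k$, each such traversal being a non‑frontal exit of a $B_k$ of $\mathbb P$‑probability $\le q_k$; these are chained by the strong Markov property, the polynomially many positions are handled by a union bound, the combinatorial overhead produces the factor $C_3^{(\cdot)}$ with $C_3=C_3(d,\dots)$, and the environments in the well‑separated copies of $B_k$ are decoupled via the strong mixing condition at the cost of an error $\varepsilon_k$ that decays (super‑)polynomially in $L_k$ and is summable. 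Since $\lambda_1$ is small enough by construction (roughly $\lambda_1\approx 1/(2C_3)$), the recursion drives $q_k\to0$ at a stretched‑exponential rate in $L_k$, and the standard argument of \cite{BDR14,GVV19} then extracts from this the atypical exit estimates defining $(T^\gamma)|\ell$ for every $\gamma<1$ — and, by stability of the $\ell$‑aligned box estimates under small rotations, for all directions in a neighbourhood of $\ell$ — i.e.\ $(T')|\ell$. When $\mathbb P$ is i.i.d.\ the decoupling is exact, $\varepsilon_k\equiv0$, and one may choose the scales so that $q_k\le\exp(-c\,L_k)$; this exponential decay yields $(T)|\ell$ (equivalently, one invokes the equivalence $(T')|\ell\Leftrightarrow(T)|\ell$ for i.i.d.\ uniformly elliptic environments).

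I expect the main obstacle to be the renormalisation step: verifying that $(\mathcal W_{c,M})|\ell$ — one inequality at one scale — genuinely furnishes the hypothesis of the \cite{BDR14,GVV19} machinery. Concretely one must check that the nested‑box geometry encoded in the definition of $(\mathcal W_{c,M})|\ell$ is reproduced at every scale under the rescaling, that all combinatorial and mixing constants can be absorbed into a single absolute $\lambda_1=\lambda_1(d,\kappa,g,C,r)$ so that $q_0<\lambda_1$ alone starts the induction, and that the accumulation of the errors $\varepsilon_k$ over all scales still leaves the conclusion $(T')|\ell$ intact. A lesser, essentially bookkeeping difficulty is the order in which the parameters of the first step must be fixed: the tilt angle, the constant $c$, the parameter $b$, and the neighbourhoods of $\ell$ provided by $(\mathcal P^{d-1})|\ell$ are mutually dependent and must be chosen before $M$ is sent to infinity.
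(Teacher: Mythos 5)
Your overall strategy matches the paper's in broad outline — a seed estimate at scale $L_0=M$ fed into a multiscale renormalisation — but the middle step is organised quite differently. The paper first proves (Propositions~\ref{propannebad} and~\ref{propquengood}) an annealed/quenched pair of estimates for a renormalisation with a \emph{fixed} ratio $N_0=1100d^3$, built on a percolation-style ``one-bad-box-per-scale'' Good/Bad hierarchy (Definition~\ref{defgood}); this yields only $(\mathfrak T^\gamma)|\ell$ for the specific, small $\gamma=\ln 2/(2\ln N_0)$ (Theorem~\ref{mainth3}). The upgrade from that small $\gamma$ to $(T')|\ell$ is not done inside the paper at all: it is handed off to Theorem~5.11 of \cite{GVV19}, and the i.i.d.\ upgrade $(T')\Rightarrow(T)$ is handed to \cite{GR18}. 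You instead propose a single effective-criterion-style iteration with a \emph{growing} ratio $L_{k+1}/L_k\to\infty$, $\log L_{k+1}/\log L_k\to 1$, with the aim of reaching $(T')|\ell$ directly. Both are legitimate renormalisation templates, and yours is arguably more self-contained, but it silently absorbs the work that the paper delegates to \cite{GVV19} Theorem~5.11; the trade-off is that your recursion has to be tuned delicately (in particular the parenthetical claim that in the i.i.d.\ case one can push the same recursion from $q_0<\lambda_1$ all the way to $q_k\le e^{-cL_k}$ is not automatic — this is precisely the content of \cite{GR18}, which you correctly fall back on). On the seed step $(\mathcal P^{d-1})|\ell\Rightarrow(\mathcal W_{c,M})|\ell$ you are in fact \emph{more} explicit than the paper, which dismisses it as a ``straightforward geometric argument''; your first-hitting reduction (conditioning on $X_{\widetilde T^{\ell}_{-1}}$ to replace a union over $O(M^d)$ starting points by one over a codimension-one set of size $O_c(M^{d-1})$) is the right mechanism for why degree $d-1$, rather than $d$, suffices, and the tilting handling of lateral faces is the standard device. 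In sum: correct in substance, with a different division of labour between a self-run renormalisation and external citations, and a worked-out seed step that the paper only asserts.
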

Arbitrary decay on $M$ of the probability involved in (\ref{defweakcondeq}) is commonly accepted be not enough so as to prove directly condition $(T)$. Nevertheless, in the one-dimensional is true as the following corollary will prove.
\begin{corollary}\label{coronedim}
In the one dimensional i.i.d. case, the following assertions are equivalents for any RWRE:
\begin{itemize}
\item There exist $L_0>0$ and a function $\varphi:[0,\infty]\rightarrow[0,\infty]$ with 
$$\lim_{M\rightarrow\infty}\varphi(M)=0,$$ 
such that for all $L\geq L_0$
 $$
 \mathbb P[X_{T_{U_{L}}}\notin \partial^+U_L]\leq \varphi(L),
 $$
 where $U_L:=\{x\in \mathbb Z:\ |x|_1 <L \}$ and $\partial^+U_L:=\{x\in\mathbb Z:\ |x|_1=L\}$.
\item $(\mathcal W)_{c,M})|e_1$ holds for some positive constants $c$ and $M$ (=:arbitrary decay for the unlikely exit probability from slabs).
\item Transient along direction $e_1$ holds
\item $(T)|e_1$ holds.
\end{itemize}
\end{corollary}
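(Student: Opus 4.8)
The plan is to establish Corollary~\ref{coronedim} by closing the chain of implications
\[
(T)|e_1\ \Longrightarrow\ \text{(first item)}\ \Longrightarrow\ \text{transience along }e_1\ \Longrightarrow\ (T)|e_1,
\]
and separately linking the second item, $(\mathcal W_{c,M})|e_1$, to $(T)|e_1$ in both directions; together these give all four equivalences. Three of the arrows are soft. Since $(T)|e_1$ contains an exponential bound — take $b=1$ in Definition~\ref{defTs}, and recall that in dimension one the only direction near $e_1$ is $e_1$ itself — it forces $P_0[X_{T_{U_L}}=-L]=P_0[\widetilde T^{e_1}_{-L}<T^{e_1}_L]\le e^{-cL}$ for large $L$, which is the first item with $\varphi(L)=e^{-cL}$; and for $M$ large (so in particular $M>1/\lambda_1$) the supremum in \eqref{defweakcondeq} can be bounded, in dimension one, by the sum over the at most $M+1$ starting sites of $\widetilde B_1(c,M)$ of annealed exit probabilities that $(T)|e_1$ renders exponentially small in $M$, giving $(\mathcal W_{c,M})|e_1$. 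The reverse arrow $(\mathcal W_{c,M})|e_1\Rightarrow(T)|e_1$ is Theorem~\ref{mainth1} specialised to the i.i.d.\ case, while ``transience along $e_1\Rightarrow(T)|e_1$'' is the classical one--dimensional equivalence recalled in the Introduction (Solomon's criterion together with a Cram\'er large--deviation bound for the environment products and Kesten's renewal tail estimate for $\sum_{k\ge0}\prod_{i=0}^{k}\rho_i$, where $\rho_x=\omega(x,-e_1)/\omega(x,e_1)$).

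The genuinely new point is that \emph{the first item of Corollary~\ref{coronedim} — arbitrary decay, with no prescribed rate — implies transience along $e_1$}. I would argue by excluding the two non--right--transient regimes. Writing $q_L:=P_0[X_{T_{U_L}}=-L]$, so that the hypothesis reads $q_L\le\varphi(L)\to0$, the one--dimensional gambler's--ruin identity gives
\[
q_L=\mathbb E_{\mathbb P}\!\left[\frac{A_L}{A_L+B_L}\right],\qquad A_L=\sum_{k=0}^{L-1}\prod_{i=1}^{k}\rho_i,\qquad B_L=\sum_{k=-L}^{-1}\prod_{i=k+1}^{0}\rho_i^{-1},
\]
where $A_L$ is a function of $(\omega_x)_{1\le x\le L-1}$ and $B_L$ a function of $(\omega_x)_{-L+1\le x\le0}$; hence $A_L$ and $B_L$ are independent. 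If the walk were transient to the left then $E_{\mathbb P}[\log\rho_0]>0$, so $B_L$ would converge to a finite limit while $A_L\to\infty$, forcing $q_L\to1$ and contradicting $q_L\to0$. If the walk were recurrent then $E_{\mathbb P}[\log\rho_0]=0$, and since uniform ellipticity makes $\log\rho_0$ bounded, $V_j:=\sum_{i=1}^{j}\log\rho_i$ is a centred, bounded--increment random walk; then $A_L,B_L\to\infty$ a.s., and, because $\log A_L=\max_{0\le j\le L-1}V_j+O(\log L)$ and likewise $\log B_L$ for the reciprocal environment (with the same increment variance), Donsker's theorem yields that $(\log A_L-\log B_L)/\sqrt L$ converges to a symmetric atomless law, so by independence $\mathbb P[A_L\ge B_L]\to\tfrac12$. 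On $\{A_L\ge B_L\}$ one has $A_L/(A_L+B_L)\ge\tfrac12$, whence $q_L\ge\tfrac12\,\mathbb P[A_L\ge B_L]\to\tfrac14>0$, a contradiction (the same estimate handles the quenched a.s.\ reading of the first item, since $\{A_L/(A_L+B_L)\ge\tfrac12\}$ then has positive probability). Therefore the walk is transient to the right.

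The step I expect to be delicate is precisely this last one. Because $\varphi$ is only assumed to vanish, with no rate, a naive Borel--Cantelli over the scales $U_L$ is unavailable, so the one--dimensional structure must enter in an essential way; and the crux of the recurrent--case estimate is the \emph{scale matching} between $\log A_L$ and $\log B_L$ — both are running maxima of centred bounded--increment random walks with the same diffusive scaling — which is what keeps $\mathbb P[A_L\ge B_L]$ bounded away from $0$. An argument bypassing Solomon's trichotomy, and hence with a better chance of surviving in $d\ge2$, would instead try to bootstrap the weak decay directly into a positive--probability regeneration; the obstruction there, and the reason the present proof is special to $d=1$, is decoupling the law of the environment ahead of the walk from the portion already revealed along its trajectory.
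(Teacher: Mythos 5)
Your proposal is correct, but it takes a genuinely different route from both proofs in the paper. The paper's first proof simply closes the loop using soft facts plus the main Theorem~\ref{mainth1}: it cites Proposition~2.6 of \cite{Sz01} for transience $\Leftrightarrow (T)|e_1$, notes that arbitrary decay trivially gives $(\mathcal W_{c,M})|e_1$ (the lateral boundary of $\widetilde B_1$ is two points in $d=1$), and then invokes the renormalization machinery of Section~2 via Theorem~\ref{mainth1} to get $(\mathcal W_{c,M})|e_1 \Rightarrow (T')|e_1$. The paper's second (``alternative'') proof --- the one it advertises as having potential to extend to $d\ge2$ --- is a finite-approximation and environmental-coupling argument: it discretizes $\mu$ to finitely many values, uses a coupling with the simple symmetric random walk to extract independence across strips (the key step is display \eqref{ineq1oned}), iterates to get exponential decay for the discretized environment, and then passes to the limit. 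You, in contrast, prove the crucial arrow ``arbitrary decay $\Rightarrow$ transience along $e_1$'' \emph{directly} via Solomon's trichotomy: the quenched gambler's-ruin identity $q_L=\mathbb E[A_L/(A_L+B_L)]$ with $A_L, B_L$ independent, the exponential domination in the left-transient case, and a Donsker running-maximum comparison in the critical case $E[\log\rho_0]=0$. Your recurrent-case argument is a nice touch: $\log A_L$ and $\log B_L$ are (up to $O(\log L)$) running maxima of independent centred walks with the same increment variance, so $\mathbb P[A_L\ge B_L]\to\tfrac12$ and $q_L\not\to 0$. This is elementary and self-contained, avoids the renormalization and the effective criterion for that step (you still use Theorem~\ref{mainth1} for $(\mathcal W_{c,M})|e_1 \Rightarrow (T)|e_1$, as the paper also does), but is intrinsically one-dimensional --- which you correctly flag. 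The paper's second proof is less elementary but is structured precisely so that the decoupling step has some hope of surviving in higher dimensions, which is the whole point of including an alternative argument.

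One small point worth making explicit in your argument: you should dispose of the degenerate subcase of recurrence in which $\log\rho_0=0$ $\mathbb P$-a.s.\ (so $\sigma=0$ and Donsker is vacuous); there $q_L=\tfrac12$ for every $L$, which already contradicts $q_L\to0$, so the Donsker step is only invoked when $\operatorname{Var}(\log\rho_0)>0$.
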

This corollary follows from the proof of Theorem \ref{mainth1}, however we give an alternative argument. We think it might work in higher dimensional cases as well. On the other hand, we notice that in the one dimensional this can be derived by a one-dimensional version of the effective criterion of \cite{Sz02}. 

\smallskip
We shall now outline the structure of this article. In the next section we prove our main result Theorem \ref{mainth1}. Section \ref{seconedim} contains an alternative proof for Corollary \ref{coronedim} which lays out a possible viewpoint to answer the stronger form of the conjecture.

\medskip

\section{Renormalization scheme: Proof of Theorem \ref{mainth1}}
\label{secren}

We mainly aim in this section to construct a re-scaling method turning out stronger or sharper estimates starting from weaker ones. Commonly, these type of theoretical constructions are called renormalization procedures. In order to the entire process works, we need a so-called \textit{seed estimate}, along with an inductive estimate to pass from scale $k$ to $k+1$, for any integer $k\geq0$. The seed estimate will be condition $(\mathcal W_{c,M})|\ell$, for certain positive constants $c$, $M$ and $\ell\in\mathbb
S^{d-1}$. We will also obtain the meaning of the constant's model $\lambda_1$, even though we will not give its precise value.

\smallskip
\noindent
Throughout this section we fix a direction $\ell\in\mathbb S^{d-1}$ and a rotation $R$ of $\mathbb R^d$ such that $R(e_1)=\ell$.

We introduce the successive dimensions of the boxes involved in the corresponding scales.

\smallskip
Specifically, we consider sequences $(L_k)_{k\geq0}$ and $(\widetilde L_k)_{k\geq0}$:
\begin{align}
\label{scale1}
&3\sqrt d <L_0<L_1,\ \ N_0:=\frac{L_1}{L_0}=1100d^3\in\mathbb N,\\
\label{scale2}
&3\sqrt d<\widetilde L_0=L_0<\widetilde L_1,\ \  \widetilde N_0:=\frac{\widetilde L_1}{\widetilde L_0}=11d^3N_0^2\in\mathbb N,\\
\label{scale3}
&\mbox{and for $k\geq1$, we define:  }L_{k+1}=N_0L_k, \  \ \widetilde L_{k+1}=\widetilde N_0\widetilde L_k.
\end{align}
Notice that we have for $k\geq 1$,
\begin{equation*}
L_k=N_0^k L_0, \ \widetilde L_k=\widetilde N_0^k\widetilde L_0, \mbox{ and  }\  \widetilde L_k<L_k^3.
\end{equation*}
Further restrictions on the scaling sequences $(L_k)_{k\geq0}$ and $(\widetilde L_k)_{k\geq0}$ will be prescribed later on.

\smallskip
We denote $\mathfrak L_k$ for integer $k\geq 0$, the set:
$$
\mathfrak L_k:=L_k\mathbb Z \times 3c\widetilde L_k\mathbb Z^{d-1}.
$$
Moreover, for integers $k\geq0$ and $x\in\mathfrak L_k$, we consider boxes $\widetilde B_1(x,\widetilde c, L_k, \widetilde L_k)$, $B_2(x,\widetilde c, L_k, \widetilde L_k)$ and its boundary frontal part $\partial^+B_2(x,\widetilde c,L_k, \widetilde L_k)$ defined by:
\begin{align*}
&\widetilde B_1(x,\widetilde c, L_k, \widetilde L_k):= R(x+[0,L_k]\times [0,3\widetilde c  \widetilde L_k]^{d-1}))\cap\mathbb Z^d,\\
&B_2(x,\widetilde c, L_k, \widetilde L_k):=R(x+(-L_k,L_k(1+1/11)\times (-\widetilde c \widetilde L_k, 4\widetilde c \widetilde L_k)^{d-1})\cap\mathbb Z^d,\\
&\mbox{along with}\\
&\partial^+B_2(x,\widetilde c,L_k, \widetilde L_k):=\partial B_{2}(x,\widetilde c,L_k, \widetilde L_k)\cap\{z\in\mathbb Z^d:\ (z-x)\geq L_k(1+1/11)\}.
\end{align*}
We introduce a further block $\dot{B}_1(x,\widetilde, L_k)$,
\begin{equation}\label{dotbox1}
\dot B_1(x,\widetilde c, L_k,\widetilde L_k):=R\left(x+(0,L_k)\times (0,3\widetilde c\widetilde L_k)^{d-1}\right)\cap\mathbb Z^d.
\end{equation}
It will be useful to consider the set of boxes in scale $k\geq0$, denoted by $\mathfrak B_k$
$$
\mathfrak B_k:=\left\{B_2(x,\widetilde c, L_k,\widetilde L_k),\ x\in\mathfrak L_k\right\}.
$$
\begin{remark}\label{remarkcover}
Let $k\geq0$ be an integer and $\widetilde c >0$.

We note that by the choice of scales given (\ref{scale1})-(\ref{scale3}) and the boxes constructed above, we have the following property:

\smallskip
For $k\geq 1$ and $x\in\mathfrak L_k$, consider for fixed $B_2(x,\widetilde c, L_k, \widetilde L_k)$, the set:
\begin{align*}
\mathfrak B_{2,L_k,x}:=\{ & \dot B_1(y,\widetilde c, L_{k-1},\widetilde L_{k-1}), \   y\in\mathfrak L_{k-1}, \\
&\mbox{  such that  } \dot B_1(y,\widetilde c,L_{k-1},\widetilde L_{k-1})\subset B_2(x,\widetilde c, L_k,\widetilde L_{k})\}.
\end{align*}
One can see that,
\begin{equation}\label{quasicov}
B_{2}(x,\widetilde c, L_k,\widetilde L_k)\subset \bigcup_{\substack{y\in \mathfrak L_{k-1}\\ \dot B_1(y,\widetilde c, L_{k-1},\widetilde L_{k-1})\in \mathfrak B_{2,L_k,x}}}\widetilde B_1(y,\widetilde c, L_{k-1},\widetilde L_{k-1}).
\end{equation}
The property prescribed in (\ref{quasicov}) will be called "quasi-cover property".
\end{remark}

\smallskip
Throughout this section, we will assume condition $(\mathcal W_{c,M})|\ell$ for certain $c,$ $M$ and direction $\ell$. We consider the sequences $(L_k)_{k\geq0}$ $(\widetilde L_k)_{k\geq0}$ satisfying (\ref{scale1})-(\ref{scale3}), where $M=L_0$. For easy in the writing and $k\geq0$ we define
\begin{align}
\label{boxesscalek}
\widetilde B_{1,k}(x)&:=\widetilde B_1(x,\widetilde c, L_k,\widetilde L_k),\ B_{2,k}(x):=B_2(x,\widetilde c,L_k,\widetilde L_k)\\
\nonumber
\dot B_{1,k}(x)&:=\dot B_1(x,\widetilde c, L_k,\widetilde L_k), \mbox{ and  }\partial^+ B_{2,k}(x):= \partial^+ B_{2}(x,\widetilde c, L_k,\widetilde L_k).\\
\end{align}
In the next definition we introduce the event \textit{Good box}. Notice that the value of the constant $\lambda_1$ will be clear along the Section proofs.
\begin{definition}[Good Box]\label{defgood}
For $x\in\mathfrak L_0$, we say that box $B_{2,0}(x)$ is $L_0-$ \textit{Good} if
$$
\sup_{\substack{x\in\widetilde B_{1,0}(x)}}P_{x,\omega}\left[X_{T_{B_{2,0}(x)}}\notin\partial^+B_{2,0}(x)\right]<\lambda_1^{\frac{1}{2}}.
$$
Otherwise, we say that the box $B_{2,0}(x)$ is $L_0-$ \textit{Bad}.

\smallskip
Recursively, for $k\geq1$ and $x\in \mathfrak L_k$, we say that box $B_{2,k}(x)$ is $L_k-$ \textit{Good} if:

\smallskip
\noindent
There exists a box $B_{2,k-1}(y)\in\mathfrak B_{k-1}$, $y\in\mathfrak L_{k-1}$, with $\dot{B}_{1,k-1}(y)\subset B_{2,k}(x)$, such that for any other box $B_{2,k-1}(z)\in\mathfrak B_{k-1}$, with $z\in\mathfrak L_{k-1}$, $\dot B_{1,k-1}(y)\subset B_{2,k}(x)$ and  $B_{2,k-1}(y)\cap B_{2,k-1}(z)=\varnothing$, we have that $B_{2,k-1}(z)$ is $L_{k-1}-$ \textit{Good}.
\noindent
Otherwise, we say that $B_{2,k}(x)$ is $L_k-$ \textit{Bad}.
\end{definition}
Roughly speaking, for $k\geq0$ and $x\in\mathfrak L_k$, the box $B_{2,k}(x)$ is $L_k-$ \textit{Good} whenever there is at most one box $B_{2,k-1}(y)$, $y\in\mathfrak L_{k-1}$ which is $L_{k-1}-$ \textit{Bad} and contained in $B_{2,k}(x)$.

\smallskip
The next remark will be useful in several parts of the remaining section.
\begin{remark}\label{remarkl1distane}
Notice that for integer $k\geq0$ and $x\in\mathfrak L_k$, the event "the box $B_{2,k}(x)$ is $L_k-$ \textit{Good}" depends at most on transitions in the set:
\begin{align}\label{transdepeB_2k}
&\mathcal B_{k,x}:=R\left(x+\left(-A_k, L_k+ \frac{A_k}{11}\right)\times \left(-\widetilde c \widetilde A_k,3\widetilde c \widetilde L_k+\widetilde c \widetilde A_k\right)^{d-1}\right)\cap\mathbb Z^d,\\
\nonumber
&\mbox{where   } A_k:=\sum_{i=0}^k L_i \mbox{  and  }\widetilde A_k:=\sum_{i=0}^k\widetilde L_i.
\end{align}
Moreover, we observe that for a box $B_{2,k}(x)$ as above, the number of boxes in $\mathfrak{B}_k$ intersecting it along a straight line along direction $\ell=R(e_1)$ is five: two at each direction $\pm\ell$ points out, besides itself. The remaining of the boxes $B_{2,k}(y)$, with centre $y\in\mathfrak L_k$ in the complementary set to the slab:
$$
\mathcal H_{x,k,1}:=\{z\in\mathbb R^d:\ |(z-x)\cdot \ell|\leq (5/2) L_k\},
$$
are at least separated $(10/11)L_k$ in $\ell^1-$ distance.

\smallskip
Analogously, for a straight line through direction $R(e_i)$, where $i\in[2,d]$ there exist at most three boxes in $\mathfrak B_k$ intersecting $B_{2,k}(x)$. The remaining boxes with centres in the complementary set to the slab:
$$
\mathcal H_{x,k,i}:=\{z\in\mathbb R^d:\ (-1/2)<(z-x)\cdot R(e_i)<(7/2)\widetilde L_k\}
$$
are at least separated $\widetilde L_k$ in terms of $\ell^1-$ distance.

\smallskip
We plainly have that for any integer $k\geq1$,
$$
A_{k-1}\leq (1/11)L_k,\ \widetilde A_{k-1}\leq (1/11)\widetilde L_{k}.
$$
As a result of the precedent discussions, for $k\geq1$ any disjoint boxes $B_{2,k-1}(y_1)$, $B_{2,k-1}(y_2)$ where the points $y_1,y_2\in\mathfrak L_{k-1}$ in the quasi-cover of $B_{2,k}$ (cf. Remark \ref{remarkcover}), its respective set of site transitions:
$$
\mathcal B_{k-1,y_1}\mbox{   and   }\mathcal B_{k-1,y_2},
$$
are at least separated $(9/11)L_k$ in $\ell^1-$ distance. This remark will be used to apply mixing condition (\ref{sma}), similarly as was mentioned in \cite{GVV19} Remark 3.
\end{remark}
Recall that we are assuming condition $(\mathcal W_{c,M})|\ell$ and tacitly we must find the value of $\lambda_1$ (cf. Definition \ref{defweakcond}).
\begin{proposition}\label{propannebad}
Let $k$ be a non-negative integer and $x\in \mathfrak L_k$. For $k=0$, and any $x\in\mathfrak L_k$, we have that
\begin{equation}\label{ine1badbox}
\mathbb P\left[B_{2,k}(x)\mbox{ is }L_k-\mbox{ \textit{Bad}}\right]\leq \lambda_1^{\frac{1}{2}}.
\end{equation}
Furthermore, for $k\geq1$ there exists a constant $\eta_1>0$ such that for any $x\in\mathfrak L_k$,
\begin{equation}\label{ine2badbox}
\mathbb P\left[B_{2,k}(x)\mbox{ is }L_k-\mbox{ \textit{Bad}}\right]\leq e^{-\eta_1 2^{k}}.
\end{equation}
\end{proposition}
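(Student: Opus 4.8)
The plan is to prove the two bounds separately, treating $k=0$ as the base case and then running an induction on $k$ that exploits the recursive definition of Good/Bad boxes together with the strong mixing condition \eqref{sma}.

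For $k=0$ the claim \eqref{ine1badbox} is essentially a restatement of the seed hypothesis. By definition, the box $B_{2,0}(x)$ is $L_0$-\emph{Bad} precisely when $\sup_{y\in\widetilde B_{1,0}(x)}P_{y,\omega}[X_{T_{B_{2,0}(x)}}\notin\partial^+B_{2,0}(x)]\geq\lambda_1^{1/2}$. By Markov's (Chebyshev's) inequality applied to the nonnegative random variable inside the supremum,
\begin{equation*}
\mathbb P\left[B_{2,0}(x)\text{ is }L_0-\textit{Bad}\right]\leq \lambda_1^{-1/2}\,\mathbb E\left[\sup_{y\in\widetilde B_{1,0}(x)}P_{y,\omega}\left[X_{T_{B_{2,0}(x)}}\notin\partial^+B_{2,0}(x)\right]\right].
\end{equation*}
Since we chose $M=L_0$ and the box $B_{2,0}(x)$ is a lattice translate of $B_2(c,M)$ (translation invariance of the mixing structure, or rather the fact that the seed bound is assumed at the relevant scale), condition $(\mathcal W_{c,M})|\ell$ in \eqref{defweakcondeq} gives that this expectation is $<\lambda_1$, hence the probability is $<\lambda_1^{1/2}$, which is \eqref{ine1badbox}.

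For the inductive step, fix $k\geq1$ and assume \eqref{ine2badbox} (or \eqref{ine1badbox} when $k-1=0$) holds at scale $k-1$. By Definition \ref{defgood}, for $B_{2,k}(x)$ to be $L_k$-\emph{Bad} there must exist \emph{two} boxes $B_{2,k-1}(y_1), B_{2,k-1}(y_2)$ from $\mathfrak B_{k-1}$, with $\dot B_{1,k-1}(y_1),\dot B_{1,k-1}(y_2)\subset B_{2,k}(x)$, which are \emph{disjoint} and both $L_{k-1}$-\emph{Bad}. (If there were at most one Bad box at scale $k-1$ inside, one could take $B_{2,k-1}(y)$ to be that one, and all other disjoint boxes would be Good.) Thus, writing $\mathfrak B_{2,L_k,x}$ for the collection in Remark \ref{remarkcover},
\begin{equation*}
\mathbb P\left[B_{2,k}(x)\text{ is }L_k-\textit{Bad}\right]\leq \sum_{\substack{y_1,y_2\in\mathfrak L_{k-1}\\ \text{disjoint, in quasi-cover}}}\mathbb P\left[B_{2,k-1}(y_1)\text{ Bad},\,B_{2,k-1}(y_2)\text{ Bad}\right].
\end{equation*}
The two events depend only on transitions in $\mathcal B_{k-1,y_1}$ and $\mathcal B_{k-1,y_2}$ respectively (Remark \ref{remarkl1distane}), and by that same remark these sets are at $\ell^1$-distance at least $(9/11)L_k$ apart. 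Hence the strong mixing condition \eqref{sma}, applied exactly as in \cite{GVV19} Remark 3, lets us decouple: $\mathbb P[\cdots]\leq e^{C\rho_k}\,\mathbb P[B_{2,k-1}(y_1)\text{ Bad}]\,\mathbb P[B_{2,k-1}(y_2)\text{ Bad}]$, where $\rho_k:=\sum_{u\in\partial^r\mathcal B_{k-1,y_1},\,v\in\partial^r\mathcal B_{k-1,y_2}}e^{-g|u-v|_1}$ is negligible (bounded, in fact summable over $k$) thanks to the exponential gap in the separation versus the polynomial growth of the boundary sizes. Using the induction hypothesis twice, each factor is at most $e^{-\eta_1 2^{k-1}}$ (or $\lambda_1^{1/2}$ when $k=1$), so the summand is at most $e^{C\rho_k}e^{-\eta_1 2^k}$. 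The number of ordered pairs $(y_1,y_2)$ in the quasi-cover is polynomial in $L_k/L_{k-1}$ and $\widetilde L_k/\widetilde L_{k-1}$, hence bounded by a constant to a power $k$; since $2^k$ grows faster than $k$ (and than $k\log(\text{const})$), for $\eta_1$ chosen small enough — and after possibly shrinking $\lambda_1$ so the $k=1$ step closes — this combinatorial and mixing overhead is absorbed, giving \eqref{ine2badbox} at scale $k$.

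The main obstacle is the bookkeeping in the inductive step: one must verify that the combinatorial factor (number of admissible disjoint pairs in the quasi-cover, governed by $N_0$ and $\widetilde N_0$) together with the mixing correction $e^{C\rho_k}$ does not outpace the doubly-exponential gain $e^{-\eta_1 2^k}$, and simultaneously that the initial constant $\lambda_1$ can be fixed small enough (depending only on $d,\kappa,g,C,r$, as claimed) so that the very first inductive step from $k=0$ to $k=1$ — where one only has $\lambda_1^{1/2}$ rather than a doubly-exponential bound — already produces something of the form $e^{-\eta_1\cdot 2}$. Once the base step is calibrated, the super-exponential rate $2^k$ propagates automatically because squaring the bound at each stage doubles the exponent while the overhead grows only geometrically. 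A secondary technical point is justifying the decoupling in \eqref{sma} when the two boxes are not both strictly inside a common finite $V$; this is handled exactly as in \cite{GVV19}, conditioning on the environment outside and using that the relevant dependence sets are separated.
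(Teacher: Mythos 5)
Your proof is correct and follows essentially the same route as the paper: Chebyshev's inequality for the base case $k=0$, then induction on $k$ exploiting that a Bad box at scale $k$ forces two disjoint Bad boxes at scale $k-1$ in its quasi-cover, decoupled via the strong mixing condition \eqref{sma}, with $\lambda_1$ calibrated so that the $2^k$-gain in the exponent absorbs the combinatorial factor and the mixing correction. One minor imprecision worth flagging: the number of admissible disjoint pairs in the quasi-cover is a fixed constant $\lambda_2$ depending only on $d$ (since $L_k/L_{k-1}=N_0$ and $\widetilde L_k/\widetilde L_{k-1}=\widetilde N_0$ do not grow with $k$), not a constant raised to the power $k$; the paper records the cumulative effect as the convergent series $\sum_{j\geq1}\ln(\lambda_2)/2^j$ entering the sequence $(c_k)$, and this only makes the absorption easier than you estimated.
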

\begin{proof}
Observe that (\ref{ine1badbox}) is a simple consequence of Chevyshev's inequality under assumption $(\mathcal W_{c,M})|\ell$. Hence, we turn to prove the inequality (\ref{ine2badbox}).

\noindent
For this end, it will be convenient to prove by induction that we have for any integer $k\geq0$ and $x\in\mathfrak L_k$, the inequality:
\begin{equation}\label{indbadboxanne}
\mathbb P\left[B_{2,k}(x)\mbox{ is }L_k-\mbox{ \textit{Bad}}\right]\leq e^{-c_k 2^{k}},
\end{equation}
where the sequence $(c_k)_{k\geq0}$ is defined as follows. We introduce the absolute constant (depending only on $d$)
\begin{equation}\label{lambda2}
\lambda_2:=\left(\frac{5}{3}\widetilde N_0\right)^{2(d-1)}\left(\frac{23}{11}N_0\right)^2
\end{equation}
and define the sequence (recall constants $C$, $g$ and $r$ in Definition \ref{defsma}):
\begin{align}\label{ckindanne}
c_0& :=\ln\left(1/\lambda_1^{\frac{1}{2}}\right), \mbox{  and for }k\geq0\\
\nonumber
c_{k+1}& :=c_k-\frac{\ln(\lambda_2)}{2^{k+1}}-\frac{\exp\left(-g(9/11)L_k\right)9r^{2d}L_k^{2}(6\widetilde c \widetilde L_k)^{2(d-1)}C}{2^{k+1}}.
\end{align}
Afterwards, we shall prove that there exists a constant $\nu_1>0$, such that
$$
\inf_{k\geq0}c_k>\nu_1,
$$
and this will end our proof. Notice that the case $k=0$ was already proven, thus we have to prove the inductive step. We assume that (\ref{indbadboxanne}) holds for $k\geq0$ and we will see that (\ref{indbadboxanne}) is satisfied when $k$ is replaced by $k+1$. We will assume that $x=0\in\mathbb Z^d$, the other cases can be analogously treated.

\smallskip
Observe now that using Definition \ref{defgood}, the event  "$B_{2,k+1}(0)$ is $L_{k+1}-$ \textit{Bad}" is contained in the following event:
\begin{align}\label{defmk}
\mathfrak M_k:=\left\{ \right.& \left. \exists B_{2,k}(y_1),\ B_{2,k}(y_2)\in\mathfrak B_k: \ \dot B_{1,k}(y_1),\ \ \dot B_{1,k}(y_2)\subset B_{2,k+1}(0),\right. \\
\nonumber
& \left. B_{2,k}(y_1)\cap B_{2,k}(y_2)=\varnothing, B_{2,k}(y_1),\ B_{2,k}(y_2) \mbox{  are  }L_k\mbox{  \textit{Bad}}\right\}.
\end{align}
We apply Remark \ref{remarkl1distane}, together with Definition \ref{defsma} to find that $\mathbb P[\mathfrak M_k]$ is bounded from above by
$$
\sum_{(y_1,y_2)\in \mathcal N_{2,k}} \Gamma_{M}(y_1,y_2)\mathbb P\left[B_{2,k}(y_1) \mbox{ is } L_{k}- \mbox{ \textit{Bad}}\right]\mathbb P\left[B_{2,k}(y_2) \mbox{ is } L_{k}- \mbox{ \textit{Bad}}\right],
$$
provided we define the set $\mathcal N_{2,k}$ as follows:
\begin{align*}
\mathcal N_{2,k}:=&\left\{(z_1,z_2)\in\mathfrak L_k\times \mathfrak L_k:\ \dot B_{1,k}(z_1),\ \dot B_{1,k}(z_2)\subset B_{2,k+1}(0),\right.\\
&\left.\ B_{2,k}(z_1)\cap B_{2,k}(z_2)=\varnothing \right\},
\end{align*}
along with, for $(y_1,y_2)\in\mathcal N_{2,k}$ we define the \textit{mixing correction} $ \Gamma_{M}(y_1,y_2)$ by (cf. Definitions \ref{defmark} and \ref{defsma} for notation),
\begin{align*}
\Gamma_{M}(y_1,y_2):=& \exp\left(\sum_{\substack{z_1\in \partial^r B_{2,k}(y_1)\\z_2\in \partial^r B_{2,k}(y_2)}}C e^{-g|y_1-y_2|_1}\right).
\end{align*}
Where we have assumed $L_0=M>10r$ (cf. Definition \ref{defmark}), in order to apply the mixing assumption of Definition \ref{defsma}.

\smallskip
We apply the induction hypothesis (\ref{indbadboxanne}) to get that
\begin{equation}\label{indineanne}
\mathbb P\left[B_{2,k}(y_1) \mbox{ is } L_{k}- \mbox{ \textit{Bad}}\right]\mathbb P\left[B_{2,k}(y_2) \mbox{ is } L_{k}- \mbox{ \textit{Bad}}\right]\leq e^{-c_k2^{k+1}},
\end{equation}
for each $(y_1,y_2)\in\mathcal N_{2,k}$. Using rough counting arguments we obtain,
\begin{align}\label{estanne}
\left|\mathcal N_{2,k}\right|&\leq \lambda_2 \mbox{cf. ((\ref{lambda2})-(\ref{scale1})-(\ref{scale2}))}\\
\nonumber
\Gamma_{M}&\leq\exp\left(\exp\left(-g(9/11)L_k\right)9r^{2d}L_k^{2}(6\widetilde c \widetilde L_k)^{2(d-1)}C\right),
\end{align}
where $|A|$ denotes the cardinality of set $A$. Observe that the last bound is uniform on $(y_1,y_2)\in\mathcal N_{2,k}$.

\smallskip
We combine (\ref{defmk}), the estimates in (\ref{estanne}) and the induction hypothesis (\ref{indineanne}) to get that $\mathbb P[B_{2,k+1}(0) \mbox{ is } L_{k+1}- \mbox{ \textit{Bad}}]$ is bounded from above by:
$$
\exp\left(-2^{k+1}\left(c_k-\frac{\ln(\lambda(d))}{2^{k+1}}-\frac{\exp\left(-g(9/11)L_k\right)9r^{2d}L_k^{2}(6\widetilde c \widetilde L_k)^{2(d-1)}C}{2^{k+1}}\right)\right).
$$
By the very definition of the constants $c_k,\ k\geq0$ in (\ref{ckindanne}), we have finished the proof of (\ref{indbadboxanne}). As was mentioned, it is convenient at this point to find $\nu_1>0$ such that:
\begin{equation}\label{ckposi}
\inf_{\substack{k\geq0}} c_k>\nu_1,
\end{equation}
whenever $L_0\geq \nu_1$. Nevertheless, note that whenever $L_0$ is chosen so that (recall $L_0=\widetilde L_0$, cf. (\ref{scale2})):
$$
\exp\left(-g(9/11)L_0\right)9L_0^{2}(6\widetilde c \widetilde L_0)^{2(d-1)}C<e^{-g(1/30)L_0}
$$
one has the following estimate for the series entering at the definition of sequence $(c_k)_{k\geq0}$ in (\ref{ckindanne}),
\begin{align*}
\inf_{\substack{k\geq 0}} c_k&\geq c_0-\left(\sum_{ k=1}^{\infty}\frac{\ln(\lambda_2)+e^{-g(1/30)L_0}}{2^k}\right)\\
&=\ln\left(\frac{1}{\phi^{\frac{1}{2}}(L_0)}\right)-\left(\ln(\lambda(d))+e^{-g(1/30)L_0}\right).
\end{align*}
Therefore we choose $1/\lambda_1^{\frac{1}{2}}=4\lambda_2$, and we plainly have there exists $\nu_1>0$, such that (\ref{ckposi}) holds whenever $L_0>\nu_1$. This ends the proof of all the required claims in the proposition.
\end{proof}
The next step into the renormalization construction will be to obtain a quenched estimate for the random walk exit from a given \textit{Good box}. This is the harder and more extensive part of our proof. As the proof shall depict, a more involved argument will be needed, when it is compared to the one given in \cite{GVV19}, Proposition and Section 5. Roughly speaking, in order to bound from above the unlikely exit by the boundary side where $-\ell$ points out, we avoid here the use of uniform ellipticity prescribed in (\ref{simplex}), instead we will successively apply the strong Markov property.
\begin{proposition}\label{propquengood}
Let $k$ be a non-negative integer and $x\in \mathfrak L_k$. Assume that the box $B_{2,k}(x)$ is $L_k-$ \textit{Good}, then there exists a constant $\eta_2>0$ such that
\begin{equation}\label{inepropquenesgood}
\sup_{\substack{y\in \widetilde B_{1,k}(x)}}P_{y,\omega}\left[X_{T_{B_{2,k}(x)}}\notin \partial^+ B_{2,k}(x)\right]\leq e^{-\eta_2 v^{k}},
\end{equation}
where $v_k:=\frac{N_0}{4}$
\end{proposition}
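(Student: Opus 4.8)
The plan is to run an induction on $k$, analogous to the renormalization step for the annealed bad-box probabilities, but now at the quenched level. The base case $k=0$ is immediate: if $B_{2,0}(x)$ is $L_0$-\emph{Good} then by Definition \ref{defgood} the quenched exit probability through the wrong boundary is bounded by $\lambda_1^{1/2}$, so \eqref{inepropquenesgood} holds with $e^{-\eta_2 v_0}=\lambda_1^{1/2}$ for a suitable $\eta_2$ (recall $v_0=N_0/4$ is a fixed constant, so this just fixes $\eta_2$). For the inductive step, suppose the box $B_{2,k+1}(x)$ is $L_{k+1}$-\emph{Good}; by the definition of a good box, all but at most one of the sub-boxes $B_{2,k}(z)$ in the quasi-cover of $B_{2,k+1}(x)$ (Remark \ref{remarkcover}) are $L_k$-\emph{Good}, hence satisfy the induction hypothesis \eqref{inepropquenesgood} at scale $k$.

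The core of the argument is a deterministic/quenched geometric estimate: starting from any $y\in\widetilde B_{1,k+1}(x)$, I would track the random walk through a sequence of the good sub-boxes $\widetilde B_{1,k}(z)$ that tile $B_{2,k+1}(x)$ along direction $\ell$, using the quasi-cover property to guarantee that whenever the walk sits in some $\widetilde B_{1,k}(z)$ it must exit the associated $B_{2,k}(z)$, and that each such exit is, with quenched probability at least $1-e^{-\eta_2 v_k}$, through the frontal face $\partial^+B_{2,k}(z)$, which pushes the walk forward by roughly $L_k$ in the $\ell$-direction. Since there are $N_0$ sub-boxes to cross in the $\ell$-direction (and one must handle the single possible bad sub-box separately — this is where one replaces uniform ellipticity by the strong Markov property, restarting the walk after it has been carried across or around the bad box), concatenating these one-step estimates via the strong Markov property at the successive exit times yields that the walk reaches $\partial^+B_{2,k+1}(x)$ before leaving through any other face, except on an event of probability at most (number of crossings) $\times\, e^{-\eta_2 v_k}$ plus the cost of the bad box. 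Choosing $v_{k+1}=N_0 v_k/ (\text{const})$, i.e. $v_k=(N_0/4)^{\,?}$ — more precisely arranging that $v_{k+1}$ relates to $v_k$ with room to absorb the polynomial prefactors and the bad-box term — gives \eqref{inepropquenesgood} at scale $k+1$. One also has to control backtracking: the walk could wander backwards through already-visited good boxes; here the frontal-exit estimates for those boxes, applied in the $-\ell$ direction via the nested structure of the $B_{2,k}$'s and the separation estimates of Remark \ref{remarkl1distane}, bound the probability of a long backward excursion, so that the walk cannot exit through the rear face $\{z:(z-x)\cdot\ell\le -L_{k+1}\}$ without first failing one of the good-box estimates.

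The main obstacle, as the text itself flags, is the lower (rear) boundary: a naive argument would use uniform ellipticity to absorb the single bad sub-box, but that costs a factor $\kappa^{-O(L_k)}$ which is fatal at this decay rate. Instead one must show that even when the walk encounters the bad box, the good boxes surrounding it (there is at most one bad box, so its $\ell$-neighbours in the quasi-cover are good) trap the walk and re-inject it forward with high quenched probability — this requires carefully setting up the order of the stopping times $T_{B_{2,k}(z)}$ and applying the strong Markov property so that the bad box is entered at most a controlled number of times, and exploiting that an exit of a good neighbour through its frontal face carries the walk past the bad region. Making the bookkeeping of "which sub-box the walk currently occupies" precise, and verifying that the accumulated error stays of the form $e^{-\eta_2 v_{k+1}}$ with $v_{k+1}=N_0 v_k/4$ after summing the $O(N_0 \widetilde N_0^{d-1})$ possible sub-box visits, is the technical heart; the mixing condition \eqref{sma} is not needed here since everything is quenched, which is the one simplification relative to Proposition \ref{propannebad}.
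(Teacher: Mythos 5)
Your high-level plan is correct and matches the paper's in outline: induct on $k$, treat the base case via Definition~\ref{defgood}, decompose the exit from $B_{2,k+1}$ into lateral exits and a rear exit, use the quasi-cover structure, and replace uniform ellipticity by the strong Markov property to avoid the fatal $\kappa^{-O(L_k)}$ cost from the bad sub-box. You also correctly observe that the argument is purely quenched and needs no mixing. So the strategic scaffolding is right.

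However, you are missing the essential technical device that actually closes the rear-exit estimate, and the piece you offer in its place (``good boxes surrounding the bad box trap the walk and re-inject it forward'') is too vague to complete the argument. The walk can cross the bad region, come back, cross again, etc., arbitrarily many times, and a ``re-injection'' heuristic with finitely many iterations of the strong Markov property does not bound the accumulated probability over all such back-and-forth excursions. What the paper does is set up a one-dimensional birth--death (gambler's ruin) coupling: it partitions the box into strips $\widehat{\mathcal H}_i$ transverse to $\ell$, introduces for each strip an environmental ratio $\rho_i = \sup_z Q_z/P_z$ of the quenched one-step-left vs.\ one-step-right probabilities between consecutive strips, and then solves the associated Poisson equation (Remark~\ref{remarkonedim}, display~\eqref{qmoned}) to get a closed-form gambler's ruin bound $\mathfrak Q_m$ in terms of $\prod \rho_l^{-1}$. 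Each good strip contributes a factor $\rho_i \lesssim \tfrac{2}{\kappa} e^{-c_k L_k}$ via \eqref{inesuprho}; the finitely many bad strips contribute only an $O(1)$ loss. This formula is what correctly sums over all backtracking trajectories, and without some equivalent of it your ``re-inject'' step does not give a bound of the required form.

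Two further slips. First, there is not ``at most one bad sub-box'': by Definition~\ref{defgood} all boxes \emph{disjoint} from one chosen sub-box must be good, and by Remark~\ref{remarkl1distane} the bad ones can occupy up to five consecutive strips in the $\ell$-direction. This matters because the gambler's ruin bound must be applied to the remaining $N_0 - O(1)$ good strips, and the paper splits into three cases according to whether this bad block sits near the rear face, near the front face, or in the middle (using the last-visit time $\mathcal T_{-9}$ and the two-segment decomposition~\eqref{inecase3first}). You omit this case analysis entirely. Second, your recursion ``$v_{k+1}=N_0 v_k/4$'' would make $v_k$ grow super-exponentially in $k$; the statement has $v^k$ with fixed $v=N_0/4$, which corresponds to the paper's $c_k L_k = (N_0/4)^k \ln(\lambda_1^{-1/2})$ with $c_k = 4^{-k} L_0^{-1}\ln(\lambda_1^{-1/2})$; you should check that the prefactors $\tfrac{2}{\kappa}$ and the bounded loss from the bad strips can be absorbed while preserving exactly this recursion, which is what forces $L_0$ to be large.
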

\begin{proof}
Let us prove by using induction the following claim:

\smallskip
\noindent
Let $(c_k)_{k\geq0}$ be a sequence defined by:
\begin{align}\label{defckquen}
c_k&:=\frac{1}{4^kL_0}\ln\left(\frac{1}{\lambda_1^{\frac{1}{2}}}\right),\ \ (k\geq0).
\end{align}
Then, for any $k\geq0$ and $x\in\mathfrak L_k$ we have that,
\begin{equation}\label{claimquengood}
\sup_{\substack{y\in \widetilde B_{1,k}(x)}}P_{y,\omega}\left[X_{T_{B_{2,k}(x)}}\notin \partial^+ B_{2,k}(x)\right]\leq e^{-c_k L_k}.
\end{equation}

\smallskip
We see that the assertion of Proposition \ref{propquengood} is implied by claim (\ref{claimquengood}), with constant $\eta_2:=L_0c_0$.

\noindent
We prove (\ref{claimquengood}) by induction on $k$. The case $k=0$ and $x\in\mathfrak L_0$ is straightforward using Definition \ref{defgood}. We indeed have the estimate,
$$
\sup_{\substack{y\in\widetilde B_{1,k}(x)}}P_{y,\omega}[X_{T_{B_{2,k}(x)}}\notin \partial^+B_{2,k}(x)]<e^{-\ln\left(\frac{1}{\phi^{\frac{1}{2}}(L_0)}\right)}=e^{-c_0v^0}.
$$
As a result, it suffices that we assume that (\ref{claimquengood}), and prove the analogous estimate (\ref{claimquengood}) when $k$ is switched by $k+1$.

\smallskip
We notice that by stationarity of the probability measure $\mathbb P$, the worst case to estimate (\ref{inepropquenesgood}) is $x=0$. Thus we can a do assume $x=0$ and also assume that the box $B_{2,k+1}(0)$ is $L_{k+1}-$ \textit{Good}. Consider the leftmost expression in (\ref{claimquengood}) when $k$ is replaced by $k+1$. We introduce the $(\mathcal F_n)_{n\geq0}-$ stopping times $\sigma_{u}^{+i}$ and $\sigma_u^{-i}$ for $u\in\mathbb R$ and integer $i\in[2,d]$
\begin{align}\label{stopsigmas}
\sigma_u^{+i}&:=\inf\{n\geq0:\ (X_n-X_0)\cdot R(e_i)\geq u\}, \mbox{  and}\\
\nonumber
\sigma_u^{-i}&:=\inf\{n\geq0:\ (X_n-X_0)\cdot R(e_i)\leq u\}.
\end{align}
It will be convenient to introduce the path space event $\mathcal I_k$ of \textit{lateral exit from the box} $B_{2,k+1}(0)$ (cf. (\ref{stopleftright}) and (\ref{stopexitset}) for notation)
$$
\mathcal I_k:=\left\{\exists i\in[2,d]:\ \sigma_{\widetilde c\widetilde L_{k+1}}^{+i}<T_{B_{2,k+1}(0)}, \mbox{  or  } \sigma_{-\widetilde c\widetilde L_{k+1}}^{-i}<T_{B_{2,k+1}(0)}\right\}.
$$
Observe that the following decomposition for any $y\in \widetilde B_{1,k+1}(0)$ is satisfied,
\begin{align}\label{decoquenest}
P_{y,\omega}[X_{T_{B_{2,k+1}(0)}}\notin\partial ^+ B_{2,k+1}(0)]\leq &P_{y,\omega}[\mathcal I_k]\\
\nonumber
&+P_{y,\omega}[\mathcal I_k^c\cap \{X_{T_{B_{2,k+1}(0)}}\cdot\ell\leq-L_{k+1}\}].
\end{align}
We will use the induction hypothesis to split the proof into getting suitable upper bounds for the expressions:
\begin{equation}\label{problat}
P_{y,\omega}[\mathcal I_k], \mbox{   and}
\end{equation}
\begin{equation}\label{probleft}
P_{y,\omega}[\mathcal I_k^c\cap \{X_{T_{B_{2,k+1}(0)}}\cdot\ell\leq -L_{k+1}\}].
\end{equation}
We begin with an estimate for the probability in (\ref{problat}). Notice first that for arbitrary $y\in\widetilde B_{1,k+1}(0)$, we can further decompose that probability as follows:
\begin{equation}\label{sumsigma}
P_{y,\omega}[\mathcal I_k]\leq \sum_{i=2}^{d}\left(P_{y,\omega}[\sigma_{\widetilde c\widetilde L_{k+1}}^{+i}<T_{B_{2,k}(0)}]+P_{y,\omega}[\sigma_{-\widetilde c\widetilde L_{k+1}}^{+i}<T_{B_{2,k}(0)}]\right).
\end{equation}
Following a close analysis as the argument to prove Proposition 5 in \cite{GVV19}, we will obtain an upper bound for the following probability
\begin{equation}\label{+2quench}
P_{y,\omega}[\sigma_{\widetilde c\widetilde L_{k+1}}^{+2}<T_{B_{2,k}(0)}].
\end{equation}
The other terms inside the sum in (\ref{sumsigma}) could be bounded by a similar argument. In order to bound the probability in (\ref{+2quench}), it will be useful to set
$$
n_k:=\frac{23}{11}N_0+1.
$$
Let us indicate that $n_k$ is the amount of successive boxes $B_{2,k}(z),\ z\in\mathfrak L_k$ along a straight line along direction $\ell$, such that $\dot B_{1,k}(z)\subset B_{2,k+1}(0)$. We introduce integer parameter $J_k$ 
\begin{align}
J_k:=&\left[\frac{\widetilde N_0}{4(n_k+1)}\right].
\end{align}
We denote by $\sigma_u$, the stopping time $\sigma_{u}^{+2}$ and observe that a similar approach as in the first bound proven in Proposition 5.6 of \cite{GVV19} shows us that for an arbitrary point $y\in\widetilde B_{1,k+1}(0)$,
\begin{align}
\nonumber
&P_{y,\omega}[\sigma_{\widetilde c \widetilde L_{k+1}}<T_{B_{2,k+1}(0)}]\\
\nonumber
&\leq \prod_{i=0}^{\left[(J_k-2)/3\right]-3}\left(n_ke^{-c_k  L_k}\right)
\label{estlat}
&\leq \exp\left(-(J_k/8)(c_kL_k-\ln(n_k))\right).
\end{align}

\smallskip
The previous upper bound is also satisfied for other directions in the set $\{\pm R(e_i),i\in[2,d]\}$ in inequality (\ref{sumsigma}). Thus we find that for arbitrary $y\in\widetilde B_{1,k+1}(0)$,
\begin{equation}\label{inelatfin}
P_{y,\omega}[\mathcal I_k]\leq \exp\left(-(J_k/8)(c_kL_k-\ln(2(d-1)n_k))\right).
\end{equation}

\smallskip
We now turn to estimate the probability displayed in (\ref{probleft}). The main strategy will be the introduction of Markov chain techniques to avoid the use of uniform elliptic assumption (\ref{simplex}). The method will improve the analogous estimate in \cite{GVV19}, Proposition 5.6 of Section 5. By Definition \ref{defgood}, one can pick a box $B_{2,k}(y),\ y\in\mathfrak L_k$ composing the quasi-cover of $B_{2,k+1}(0)$ (cf. Remark \ref{remarkcover}), such that any other box composing the quasi-cover of box $B_{2,k+1}(0)$ and not intersecting box $B_{2,k}(y)$, is $L_k-$ \textit{Good}. Thus, let us start by introducing suitable terminology which localizes box $B_{2,k}(y)$.

\smallskip
For integer $i\in [1,N_0]$ and $k$ fixed as above, we define $\mathfrak B_{k,i}$ the set of boxes in $\mathfrak B_k$ at position $i$ towards direction $-\ell$ points out, as follows
$$
\mathfrak B_{k,i}:=\{B_{2,k}(w), w\in\mathfrak L_k, \ w\cdot \ell=-iL_k,\ \dot B_{1,k}(w)\subset B_{2,k+1}(0)\}.
$$
By hypothesis $B_{2,k+1}(0)$ is $L_{k+1}-$ \textit{Good}, thus Remark \ref{remarkl1distane} says that there exist at most five consecutive integers $i\in [1,N_0]$, such that the sets $\mathfrak B_{k,i}$ contain $L_k-$ \textit{Bad} boxes, and all another box composing a quasi-cover as in Remark \ref{remarkcover} is $L_k-$ \textit{Good}. Therefore, in the worst case of Definition \ref{defgood}, we can choose an index $\widetilde i\in[1,N_0]$ so that the sets $\mathfrak B_{k,i}$, with $i\in [\widetilde i, \widetilde i+4]$ contain all of five bad boxes along direction $\ell$.

\noindent
Note that there exists a further case, i.e. when the bad boxes along direction $\ell$ are located toward $+\ell$ points out, nevertheless our argument will show that in this case the estimates are sharper (cf. (\ref{leftquencase1}), comments below (\ref{inefincase1}) and Remark \ref{remarkonedim}).

\noindent
We split the argument into three cases:
\begin{enumerate}[(i)]
\item \label{case1quen} Case $\widetilde i\in [N_0-9, N_0]$.
\end{enumerate}
In this case, we will prove that for any $y\in\widetilde B_{1,k+1}(0)$,
\begin{align}\label{firstinecase1}
&P_{y,\omega}[\mathcal I_k^c,\ \{X_{T_{B_{2,k+1}(0)}}\cdot\ell\leq -L_{k+1}\}]\\
\nonumber
&\leq P_{y,\omega}[\mathcal I_k^c,\ \{X_{T_{B_{2,k+1}(0)}}\cdot\ell\leq -(N_0-9)L_{k}\}].
\end{align}
We need to introduce some further definitions in order to prove (\ref{firstinecase1}). Recall that we have a given box $B_{2,k+1}(0)$ which  is $L_{k+1}-$ \textit{Good}, an arbitrary point $y\in\widetilde B_{1,k+1}(0)$, we are assuming the induction hypothesis (\ref{claimquengood}) and thus the integer $k$ is fixed. For $i\in \mathbb Z$, we define the strip $\mathcal H_i$ by
$$
\mathcal H_i:=\{x\in\mathbb Z^d:\ \exists z\in \mathbb Z^d\ |x-z|_1=1,\ (z-iL_k)(x-iL_k)\leq0\}.
$$
Furthermore, we introduce the truncated strip $\widehat{ \mathcal H}_i$, defined by ($y\in\widetilde B_{1,k+1}(0)$ is fixed as above)
\begin{equation}\label{striptil}
\widehat{ \mathcal H}_i:=\left\{x\in\mathbb Z^d:\ \forall i\in[2,d] \ \ |(x-y)\cdot R(e_i)|<\widetilde c\widetilde L_{k+1}\right\}
\end{equation}
We also define a function $I:\mathbb Z^d\mapsto \mathbb Z$ such that $I(z)=i$ on  $\{x\in\mathbb Z^d:\ x\cdot\ell\in [iL_k-(L_k/2),iL_k+(L_k/2)) \}$. Notice that under our choice of $L_0$ in (\ref{scale1}), we have $I(z)=i$ for $z\in \mathcal H_i$. It will be useful as well to introduce a sequence $(V_n)_{n\geq0}$ of $(\mathcal F_n)_{n\geq0}-$ stopping times, recording the successive visits to different strips $\mathcal H_{i},\ i\in \mathcal Z$. We define recursively,
\begin{align*}
&V_0=0,\ V_1=\inf\left\{n\geq 0:\ X_n\in \mathcal H_{I(X_0)+1}\cup \mathcal H_{I(X_0)-1}\right\}, \mbox{  and for $j>1$}\\
&V_j=V_{j-1}+V_1\circ\theta_{V_{j-1}}.
\end{align*}
We define random variables $P_z$ and $Q_z$,
\begin{align*}
P_z(\omega):=P_{z,\omega}[X_{V_1}\in \mathcal H_{I(X_0)+1}]\mbox{  and  }Q_z(\omega):=P_{z,\omega}[X_{V_1} \in \mathcal H_{I(X_0)-1}].
\end{align*}
for $z\in\mathbb Z^d$ (notice that $P_z(\omega)+Q_z(\omega)=1$). For integer $i$ we further define the random variable $\rho_i$, via
\begin{equation}\label{defrhoi}
\rho_i(\omega):=\sup\left\{\frac{Q_z(\omega)}{P_z(\omega)},\ z\in\widetilde{\mathcal H}_i\right\}.
\end{equation}
For fixed $\omega \in \Omega$ and $w_0:=N_0(1+(1/11))$,let us now introduce a function $f_\omega:\mathbb Z\mapsto (0,\infty)$ such that
\begin{align}
\label{deffunquen}
&f_\omega(j)=0, \mbox{  for $j\geq w_0+1$}.\\
\nonumber
&f_\omega(j)=\sum_{\substack{j\leq n\leq w_0}}\prod_{\substack{n< m \leq w_0}} \rho_m^{-1}(\omega)  \mbox{  otherwise.}
\end{align}
Since the environment $\omega$ will remain fixed along the proof, with a little abuse of notation, we denote by $P_z,\ Q_z$ and $\rho_i$ the values of the same functions in $\omega$. We also drop $\omega$ from the environmental function $f_\omega$. In these terms, we claim that
\begin{equation}\label{claimleftquen}
P_{z_1,\omega}[\mathcal I_k^c,\ \{X_{T_{B_{2,k+1}(0)}}\cdot\ell\leq -(N_0-9)L_{k}\}]\leq \frac{f(0)}{f(-(N_0-9))}.
\end{equation}
for an arbitrary point $z_1\in \widetilde{\mathcal H}_0$ (recall that $\widetilde{\mathcal H}_0$ depends on $y\in\widetilde B_{1,k+1}(0)$, see (\ref{striptil})).

\smallskip
In order to prove claim (\ref{claimleftquen}), one follows a similar argument as in \cite{GVV19}, second part of the proof of Proposition 5.6 (see also \cite{Sz02}, Proposition 2.1 for the original argument).

\noindent
As a result, for any $z_1\in\widehat{\mathcal H}_0$
\begin{align}\label{leftquencase1}
&P_{z_1,\omega}[\mathcal I_k^c,\ \{X_{T_{B_{2,k+1}(0)}}\cdot\ell\leq -(N_0-9)L_{k}\}]\\
\nonumber
&\leq \frac{\sum_{\substack{0\leq n\leq w_0}}\prod_{\substack{n<j\leq w_0}}\rho_{j}^{-1}}{\prod_{\substack{-(N_0-9)<j\leq w_0}}\rho_j^{-1}}=\prod_{\substack{-(N_0-9)<j<0}}\sum_{\substack{0\leq n\leq w_0}}\prod_{\substack{0\leq j\leq n}}\rho_j.
\end{align}
Observe that for each point $z\in\widehat{ \mathcal H}_i,\ i\in [-N_0, N_0(1+(1/11)]$, there exists a point $u:=u(z)\in \widetilde B_{1,k}(v)$ for some $v\in\mathfrak L_k$ (a box composing the quasi-cover of box $B_{2,k+1}(0)$, cf. Remark \ref{remarkcover}), such that $|z-u|_1$ together with $u\cdot\ell\geq iN_k$. Therefore, in virtue of the precedent discussion and uniform ellipticity (\ref{simplex}), we have
\begin{equation}\label{inesuprho}
\rho_i\leq \sup_{\substack{x\in\widehat B_{i,k}}}\frac{\frac{1}{\kappa}P_{x,\omega}[X_{T_{B_{2,k}(v)}}\notin \partial ^+B_{2,k}(v)]}{1-\frac{1}{\kappa}P_{x,\omega}[X_{T_{B_{2,k}(v)}}\notin \partial ^+B_{2,k}(v)]},
\end{equation}
where for $i\in [-N_0,N_0(1+(1/11))]$, we have denoted by $\widehat B_{i,k}$ the set $\{\ x\in \widetilde B_{1,k}(v),\ \mbox{ some  }v\in\mathfrak L_k,\ \dot B_{1,k}(v)\subset B_{2,k+1}(0),\  v\cdot \ell=iL_k \}$. Combining the induction hypothesis (\ref{claimquengood}), (\ref{inesuprho}) and (\ref{leftquencase1}), we find that for arbitrary $z_1\in \widehat{\mathcal H}_0$
\begin{equation}\label{inefincase1}
P_{z_1,\omega}[\mathcal I_k^c,\ \{X_{T_{B_{2,k+1}(0)}}\cdot\ell\leq -(N_0-9)L_{k}\}]\leq \left(\frac{2}{\kappa}e^{-c_kL_k}\right)^{N_0-9},
\end{equation}
provided that $L_0\geq \nu_1$ for some constant $\nu_1>0$.

\noindent
It is now straightforward to see that the case of bad boxes located toward $+\ell$ points out is more handling.

\smallskip
We continue with defining $(\mathcal F_n)_{n\geq0}-$ stopping times (cf. (\ref{striptil}) for notation)
$$
T_0:=\inf\{n\geq0:\ X_n\in \widehat{\mathcal H}_0\},
$$
together with
\begin{equation}\label{stoptily}
\widetilde T_y=\inf\left\{n\geq0: \ |\left(X_n-y\right)\cdot R(e_j)|\geq\widetilde c \widetilde L_{k+1} \mbox{  for some  }j\in[2,d]  \right\}.
\end{equation}

Fix $y\in \widetilde B_{1,k+1}(0)$, we observe that on the set $\{\mathcal I_k^c,\ \{X_{T_{B_{2,k+1}(0)}}\cdot\ell\leq -(N_0-9)L_{k}\}$, $P_{y,\omega}-$ a.s. we have $T_0<T_{B_{2,k+1}(0)}$ (cf. (\ref{stopexitset}) for notation) and $T_0<\widetilde T_y$ (cf. (\ref{stoptily})), as a result of the strong Markov property and using inequality (\ref{firstinecase1}), for an arbitrary $y\in\widetilde B_{1,k+1}(0)$ we have that
\begin{align}
\label{inefinfincase1}
&P_{y,\omega}[\mathcal I_k^c,\ X_{T_{B_{2,k+1}(0)}}\cdot\ell\leq -L_{k+1}]\\
\nonumber
&\leq P_{y,\omega}[\mathcal I_k^c,\ X_{T_{B_{2,k+1}(0)}}\cdot\ell\leq -(N_0-9)L_{k}]\\
\nonumber
&\leq \sum_{z_1\in\widehat{\mathcal H}_0}P_{y,\omega}[T_0<T_{B_{2,k+1}(0)}\wedge \widetilde T_y,\ X_{T_0}=z_1]\\
\nonumber
&\times P_{z_1,\omega}[\mathcal I_k^c,\ \{X_{T_{B_{2,k+1}(0)}}\cdot\ell\leq -(N_0-9)L_{k}\}]\\
\nonumber
&\leq \sup_{\substack{z_1\in \widehat{ \mathcal H}_0}}P_{z_1,\omega}[\mathcal I_k^c,\ \{X_{T_{B_{2,k+1}(0)}}\cdot\ell\leq -(N_0-9)L_{k}\}]\stackrel{(\ref{inefincase1})}\leq \left(\frac{2}{\kappa}e^{-c_kL_k}\right)^{N_0-9}.
\end{align}

\begin{enumerate}[(i)]
\setcounter{enumi}{1}
\item \label{case2quen} Case $\widetilde i\in [1,4]$.
\end{enumerate}
In this case, we push the walk up to the last time it gets to truncated strip $\widehat{\mathcal H}_{-9}$ and then, we will perform a similar analysis as in case (\ref{case1quen}). We fix $y\in\widetilde B_{1,k+1}(0)$ and define for integer $u\in[-N_0, N_0(1+(1/11))]$, the random time
$$
\mathcal T_{u}:=\sup\{n\geq0:\ X_n\in \widehat{\mathcal H}_u\}.
$$
Notice that on the event $\{ \mathcal I_k^c,\ X_{T_{B_{2,k+1}(0)}}\cdot \ell\leq -L_{k+1}\}$, $P_{y,\omega}-$ a.s. we have $\mathcal T_{-9}<T_{B_{2,k+1}(0)}$ and
$\mathcal T_{-9}<\widetilde T_y$ (cf. (\ref{stoptily})). Thus, in particular on $\{ \mathcal I_k^c,\ X_{T_{B_{2,k+1}(0)}}\cdot \ell\leq -L_{k+1}\}$, the random time $\mathcal T_{-9}$ is $P_{y,\omega}-$ a.s. finite and moreover, using the Markov property we find that
\begin{align}
\label{inecase2first}
&P_{y,\omega}[\mathcal I_k^c,\ X_{T_{B_{2,k+1}(0)}}\cdot \ell\leq -L_{k+1}]\\
\nonumber
&=\sum_{\substack{n\geq0,\ z_1\in\widehat {\mathcal H}_{-9}}}P_{y,\omega}[\mathcal T_{-9}=n<T_{B_{2,k+1}(0)}\wedge \widetilde T_y,\ X_{\mathcal T_{-9}}=z_1 ]\\
\nonumber
&\times P_{z_1,\omega}[\mathcal I_k^c,\ X_{T_{B_{2,k+1}(0)}}\cdot \ell\leq -L_{k+1},\ \widetilde H_{\widehat{\mathcal H}_{-9}}=\infty]\\
\nonumber
&\leq \sup_{\substack{z_1\in \widehat{\mathcal H}_{-9}}}P_{z_1,\omega}[\mathcal I_k^c,\ X_{T_{B_{2,k+1}(0)}}\cdot \ell\leq -L_{k+1},\ \widetilde H_{\widehat{\mathcal H}_{-9}}=\infty],
\end{align}
provided that for a set $A\subset \mathbb Z^d$, we defined the stopping time $\widetilde H_A:=\inf\{n\geq1:\ X_n\in A\}$. Moreover, we observe that for any $z_1\in\widehat{ \mathcal H}_{-9}$, by the Markov property we have that

\begin{align}\label{inecase2}
&P_{z_1,\omega}[ \mathcal I_k^c,\ X_{T_{B_{2,k+1}(0)}}\cdot \ell \leq -L_{k+1},\  \widetilde H_{\widehat{\mathcal H}_{-9}}=\infty]\\
\nonumber
&\leq \sum_{\substack{z\in \widehat{\mathcal H}_{-11}}} E_{z_1,\omega}[\widetilde H_{\widehat{\mathcal H}_{-11}}<T_{B_{2,k+1}(0)}, X_{\widetilde H_{\widehat{\mathcal H}_{-11}}}= z ] \times P_{z,\omega}[\mathcal I_k^c, \widetilde H_{\widehat{\mathcal H}_{-N_0}}<\widetilde H_{\widehat{\mathcal H}_{-10}}] \\
\nonumber
&\leq \sup_{\substack{z_2\in \widehat{\mathcal H}_{-11}}}P_{z_2,\omega}[\mathcal I_k^c,\ \widetilde H_{\widehat{\mathcal H}_{-N_0}}<\widetilde H_{\widehat{\mathcal H}_{-10}}].
\end{align}
Using the last inequality of (\ref{inecase2}), we have for any $z_1\in\widetilde B_{1,k+1}(0)$,
\begin{align}\label{ineref}
&P_{z_1,\omega}[\mathcal I_k^c,\  X_{T_{B_{2,k+1}(0)}}\cdot\ell\leq -(N_0-9)L_{k}]\\
\nonumber
&\leq P_{z_1,\omega}[\mathcal I_k^c,\ \widetilde H_{\widehat{\mathcal H}_{-(N_0-9)}}<\widetilde H_{\widehat{\mathcal H}_{N_0(1+(1/11))}}].
\end{align}
In turn, to estimate the right hand side of (\ref{ineref}), we will introduce for reference purposes a one-dimensional coupling in the next remark.
\begin{remark}\label{remarkonedim}
For fixed $\omega \in\Omega$, we consider the one-dimensional random walk $(M_n)_{n\geq0}$ with absorbing barriers in $l_i-1:=-N_0-1$ and $l_j+1:=N_0(1+(1/11))+1$, and law $\widehat P_{m}$ where $m\in[l_i-1,l_j+1]$, such that
\begin{align*}
&\mbox{For $i\in [l_i,l_j]$, and $n\geq0$, we define transitions:}\\
&\widehat P_m[M_{n+1}=i+1|M_n=i]=1-\widehat P_m[M_{n+1}=i-1|M_n=i]:=\frac{1}{1+\rho_i}.\\
& \mbox{For $n\geq0$, the starting point is $m$ and the absorbing barriers are given by:}\\
&\widehat P_m[M_0=m]=1,\\
&\widehat P_m[M_{n+1}=l_i-1|M_n=l_i-1]=\widehat P_m[M_{n+1}=l_j+1|M_n=l_j+1]=1.
\end{align*}
This establishes a coupling between the \textit{actual random walk} $(X_n)_{n\geq0}$ and the one-dimesional $(M_n)_{n\geq0}$.

Roughly speaking, for fixed $y\in \widetilde B_{1,k+1}$ the one-dimensional random walk $(M_n)_{n\geq0}$ has the worst choice for the stationary transition $\widehat P_{l_k}[M_{n+1}=i+1|M_n=i]=:\alpha_i, \ i\in [-N_0,N_0(1+(1/11))]$ (cf. \ref{defrhoi})), when we consider the movement of $(X_n)_{n\geq0}$ along the event $\{\mathcal I_k^c,\ \widetilde H_{\widehat{\mathcal H}_{i}}<\widetilde H_{\widehat{ \mathcal H}_j}\}$, for $i<j$. It is now straightforward to see that for any point $x\in \widehat{\mathcal H}_m$, where $i\leq m\leq j$ we have
\begin{equation}\label{ineoneddim}
P_{x,\omega}[\mathcal I_k^c,\ \widetilde H_{\widehat{\mathcal H}_i}<\widetilde H_{\widehat{\mathcal H}_j}]\leq \widehat P_m[(M_n)_{n\geq0}\mbox{ hits }i\mbox{ before }j].
\end{equation}
The associated Poisson equation is:
$$
\begin{array}{c}
    \mathfrak Q_m:=P_{m}[(M_n)_{n\geq0}\mbox{ hits }i\mbox{ before }j]=\alpha_i \mathfrak Q_{m+1}+(1-\alpha_i)\mathfrak Q_{m-1}, \ m\in(i,j) \\
    \mathfrak Q_i=1,\mbox{  and  }  \mathfrak Q_j=0.
\end{array}
$$
The system above has unique solution (cf. \cite{Ch60} pp. 67-71):
\begin{equation}\label{qmoned}
\mathfrak Q_m=\frac{\sum_{m\leq n\leq j}\prod_{n<l\leq j}\rho_l^{-1}}{\sum_{i\leq n\leq j}\prod_{n<l\leq j}\rho_l^{-1}}.
\end{equation}
Therefore, in view of (\ref{ineoneddim}) we get
\begin{equation}\label{inerem1dfin}
\sup_{\substack{x\in \widehat{\mathcal H}_k}}P_{x,\omega}[\mathcal I_k^c,\ \widetilde H_{\widehat{\mathcal H}_i}<\widetilde H_{\widehat{\mathcal H}_j}]\leq \mathfrak Q_m,
\end{equation}
where $\mathfrak Q_m$ has the expression in display (\ref{qmoned}).
\end{remark}
We apply the estimate (\ref{inerem1dfin}) to inequality \ref{inecase2} to find that for any $y\in\widetilde B_{1,k+1}(0)$,
\begin{align}
\nonumber
P_{y,\omega}[\mathcal I_k^c,X_{T_{B_{2,k+1}(0)}}\cdot\ell\leq -L_{k+1}]&\leq \sup_{\substack{z_2\in\widehat{\mathcal H}_{-11}}}P_{z_2,\omega}[\mathcal I_k^c,\ \widetilde H_{\widehat{\mathcal H}_{-N_0}}<\widetilde H_{\widehat{\mathcal H}_{-10}}]\\
\nonumber
&\leq\frac{\sum_{-11\leq n\leq -10}\prod_{n<j\leq -10}\rho_{j}^{-1} }{\sum_{-N_0\leq n\leq -10}\prod_{n<j\leq -10}\rho_{j}^{-1}}\\
\label{inecase2fin}
&\leq \left(\frac{2}{\kappa}e^{-c_kL_k}\right)^{N_0-11}
\end{align}
provided that $L_0\geq\nu_2$ for certain constant $\nu_2>0$. We have used (\ref{inecase2first}) and the induction hypothesis (\ref{claimquengood}) to get (\ref{inecase2fin}).

\begin{enumerate}[(i)]
\setcounter{enumi}{2}
\item \label{case3quen} Case $\widetilde i\in(4,N_0-9)$.
\end{enumerate}
In this case, we have an in-between \textit{hole of three possible bad boxes}. For an arbitrary $y\in\widetilde B_{1,k+1}(0)$, we define the sets $\widehat{\mathcal H}_{i}$, where $i\in[-N_0,N_0(1+(1/11))]$, as in case (\ref{case1quen}). An analogous argument using the Markov property as the one given in cases (\ref{case2quen}) and (\ref{case1quen}), shows that for an arbitrary $y\in\widetilde B_{1,k+1}(0)$
\begin{align}
\label{inecase3first}
&P_{y,\omega}[\mathcal I_k^c, X_{T_{B_{2,k+1}(0)}}\cdot\ell\leq -L_{k+1}] \\
\nonumber
&\leq \sup_{\substack{z_1\in\widehat{\mathcal H}_0}}P_{z_1,\omega}[\mathcal I_k^c, \widetilde H_{\widehat{ \mathcal H}_{-\widetilde i}}<\widetilde H_{\widehat{ \mathcal H}_{N_0(1+(1/11))}}] \sup_{\substack{z_2\in \widehat{\mathcal H}_{-(\widetilde i+6)}}}P_{z_2,\omega}[\mathcal I_k^c, \widehat{\mathcal H}_{-N_0}<\widehat{\mathcal H}_{-(\widetilde i+5)}].
\end{align}
We apply Remark \ref{remarkonedim} on the first term to the right side of inequality (\ref{inecase3first}), and we get the estimate
\begin{align}
\label{inefthestcase3}
&\sup_{\substack{z_1\in\widehat{\mathcal H}_{0}}}P_{z_1,\omega}[\mathcal I_k^c, \widetilde H_{\widehat{ \mathcal H}_{-\widetilde i}}<\widetilde H_{\widehat{ \mathcal H}_{N_0(1+(1/11))}}]\\
\nonumber
&\leq \frac{\sum_{\substack{0\leq n\leq N_0(1+(1/11))}}\prod_{\substack{n<j\leq N_0(1+(1/11))}}\rho_{j}^{-1}}{\sum_{\substack{-\widetilde i\leq n\leq N_0(1+(1/11))}}\prod_{\substack{n<j\leq N_0(1+(1/11))}}\rho_{j}^{-1}}.
\end{align}
Furthermore, we use the inequality (\ref{inesuprho}) along with the induction assumption (\ref{claimquengood}) into inequality (\ref{inefthestcase3}) to find that
\begin{equation}\label{inecase3term1}
\sup_{\substack{z_1\in\widehat{\mathcal H}_{0}}}P_{z_1,\omega}[\mathcal I_k^c, \widetilde H_{\widehat{ \mathcal H}_{-\widetilde i}}<\widetilde H_{\widehat{ \mathcal H}_{N_0(1+(1/11))}}]\leq \left(\frac{2}{\kappa}e^{-c_kL_k}\right)^{\widetilde i-1},
\end{equation}
provided that $L_0\geq\nu_3$, where $\nu_3>0$ is certain positive constant.

\noindent
A quite similar argument as the given above, with the help of Remark \ref{remarkonedim}, the induction hypothesis (\ref{claimquengood}) and the inequality (\ref{inesuprho}) provides the estimate,
\begin{align}
\nonumber
&\sup_{\substack{z_2\in \widehat{\mathcal H}_{-(\widetilde i+6)}}}P_{z_2,\omega}[\mathcal I_k^c, \widehat{\mathcal H}_{-N_0}<\widehat{\mathcal H}_{-(\widetilde i+5)}]\\
\label{inecase3term2}
&\leq \left(\frac{2}{\kappa}e^{-c_kL_k}\right)^{N_0-\widetilde i-7}
\end{align}
provided that $L_0\geq\nu_4$, where $\nu_4>0$ is certain positive constant.

Thus, combining both upper bounds (\ref{inecase3term1})-(\ref{inecase3term2}), in virtue of the inequality (\ref{inecase3first}), for any point $y\in\widetilde B_{1,k+1}(0)$ we obtain
\begin{equation}\label{inecase3fin}
P_{y,\omega}[\mathcal I_k^c, X_{T_{B_{2,k+1}(0)}}\cdot\ell\leq -L_{k+1}]\leq  \left(\frac{2}{\kappa}e^{-c_kL_k}\right)^{N_0-8}
\end{equation}
This finishes the analysis of case (\ref{case3quen}) and close our required estimates for the probability in (\ref{probleft}).

\smallskip
We now combine the estimates given in cases (\ref{case1quen})-(\ref{case3quen}) along with the \textit{lateral} estimate (\ref{inelatfin}). Specifically, in view of inequality (\ref{inelatfin}), we use the inequalities displayed in (\ref{inefinfincase1})- (\ref{inecase2fin})- (\ref{inecase3fin}), in order to see that
$$
\sup_{\substack{y\in \widetilde B_{1,k+1}(0)}}P_{y,\omega}[X_{T_{B_{2,k+1}}}\notin \partial^+B_{2,k+1}(0)]\leq 2\left(\frac{2}{\kappa}e^{-c_kL_k}\right)^{N_0-9}\leq e^{-\frac{c_kL_{k+1}}{4}}
$$
provided that $L_0>\nu_1$, for certain constant of the model $\nu_5>0$. We have used our scaling choice (\ref{scale1})-(\ref{scale3}), which implies in particular that $N_0-9>N_0/2$. Furthermore, we have chosen $L_0$ large enough so that
$$
2\left(\frac{2}{\kappa}\phi^{\frac{1}{2}}(L_0)\right)^{\frac{N_0}{2}}\leq e^{-c_1L_1}=\phi^{\frac{N_0}{8}}(L_0).
$$
This ends the induction and proves (\ref{claimquengood}) by using the expression of constant $(c_k)_{k\geq0}$ in (\ref{defckquen}).
\end{proof}
We now proceed to combine Proposition \ref{propannebad} and Proposition \ref{propquengood} to localize \textit{a generic box of scale} $L$, for a large number $L$ between two consecutive boxes of scales $L_k$ and $L_{k+1}$. We start with introducing an auxiliary stretched exponential condition.
\begin{definition}\label{defTGam}
Let $\gamma\in(0,1]$, $\ell\in\mathbb S^{d-1}$ and $R$ be a rotation of $\mathbb R^d$, such that $R(e_1)=\ell$.
For $L>0$ we introduce box $B_{0,L}$ by
$$
B_{0,L}=R\left((-L,L)\times(-2L^3,2L^3)^{d-1}\right)\cap \mathbb Z^d.
$$
\noindent
We say that condition $(\mathfrak T^\gamma)|\ell$ holds, if
\begin{equation}\label{conTga}
\limsup_{\substack{L\rightarrow\infty}}L^{-\gamma}\ln \left(P_0[X_{T_{B_{0,L}}}\notin \partial^+B_{ 0,L}]\right)<0.
\end{equation}
\end{definition}
Let us mention that condition $(\mathfrak T^\gamma)|\ell$ is a priori weaker than condition $(T^\gamma)|\ell$ in Definition \ref{defTs}. The detail can be found in Lemma 2.2 of \cite{Gue19} for the case $\gamma=1$ and Appendix of \cite{GVV19} for $\gamma\in(0,1)$.

We let constant $\lambda_1$ in Defintion \ref{defweakcond} as follows
$$
\lambda_1:=\min\{h\}
$$
Roughly speaking, we ask the minor requirement in order to satisfy Propositions \ref{propannebad} and \ref{propquengood}.
\begin{theorem}\label{mainth3}
Assume that condition $(\mathcal W_{c,M})|\ell$ holds. Then there exists a constant $\gamma>0$, such that condition $(\mathfrak T^\gamma)|\ell$ holds.
\end{theorem}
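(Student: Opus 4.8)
The plan is to obtain condition $(\mathfrak T^\gamma)|\ell$ by a single–scale sprinkling argument that feeds the annealed decay of Proposition \ref{propannebad} and the quenched decay of Proposition \ref{propquengood} into a generic box $B_{0,L}$. Fix $L$ large and let $k=k(L)$ be the largest integer with $L_k\le L$, so that $L_k\le L<L_{k+1}=N_0L_k$, i.e. $k=\lfloor\log(L/L_0)/\log N_0\rfloor$. Setting $\alpha:=\log 2/\log N_0$ and $\beta:=\log(N_0/4)/\log N_0$ one gets $2^{k}\ge\tfrac12(L/L_0)^{\alpha}$ and $(N_0/4)^{k}\ge (4/N_0)(L/L_0)^{\beta}$, with $0<\alpha<\beta<1$ because $N_0=1100d^3$ is much larger than $16$. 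Moreover $\widetilde L_k<L_k^3$ and in fact $\widetilde L_k/L_k^3=(\widetilde L_0/L_0^3)\,(\widetilde N_0/N_0^{3})^{k}=(\widetilde L_0/L_0^3)\,100^{-k}\to 0$ by \eqref{scale2}--\eqref{scale3}, so each scale-$k$ box $B_{2,k}(x)$ is much thinner than $B_{0,L}$ in every transverse direction and of the same order as $B_{0,L}$ in the $\ell$-direction; in particular $B_{0,L}$ is crossed, from its back face towards $\partial^+B_{0,L}$, by a chain of at least one (and transversely by $\gg1$) full layers of scale-$k$ boxes contained in $B_{0,L}$, up to a constant number of buffer layers near $\partial B_{0,L}$.

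Write $P_0[X_{T_{B_{0,L}}}\notin\partial^+B_{0,L}]=\mathbb E\big[P_{0,\omega}[X_{T_{B_{0,L}}}\notin\partial^+B_{0,L}]\big]$, where $\mathbb E$ is expectation under $\mathbb P$, and split over the ``good configuration'' event
\[
\mathcal G_L:=\big\{\text{every }B_{2,k}(x)\in\mathfrak B_k\text{ with }B_{2,k}(x)\cap B_{0,L}\neq\varnothing\text{ is }L_k\text{-}\textit{Good}\big\}.
\]
The number of boxes of $\mathfrak B_k$ meeting $B_{0,L}$ is polynomial in $L$ (at most $C(L/L_k)(L^3/\widetilde L_k)^{d-1}\le CL^{3d}$), so a crude union bound together with \eqref{ine2badbox} gives, for all $L$ large,
\[
\mathbb P[\mathcal G_L^{c}]\le CL^{3d}e^{-\eta_1 2^{k}}\le e^{-\tfrac{\eta_1}{4}(L/L_0)^{\alpha}} .
\]
Note that this step uses only the marginal bound on one Bad box, so no additional mixing or independence input is required here.

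On $\mathcal G_L$ I would bound the quenched probability by a traversal argument following \cite{BDR14} and \cite{GVV19}. By the quasi-cover property (Remark \ref{remarkcover}) the slabs $\widetilde B_{1,k}(x)$ of the scale-$k$ boxes contained in $B_{0,L}$ cover the interior of $B_{0,L}$; hence whenever the walk is in that interior it sits in the starting slab $\widetilde B_{1,k}(x)$ of some $L_k$-\textit{Good} box $B_{2,k}(x)\subset B_{0,L}$, and by Proposition \ref{propquengood} it exits $B_{2,k}(x)$ through $\partial^+B_{2,k}(x)$ with quenched probability at least $1-e^{-\eta_2 (N_0/4)^{k}}$ for a constant $\eta_2>0$. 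The scale relations \eqref{scale1}--\eqref{scale3}, and in particular the $1+1/11$ overlap of consecutive blocks, guarantee that $\partial^+B_{2,k}(x)$ lies in the starting slab of the next scale-$k$ box one layer ahead in direction $\ell$, so these one-box events can be chained. Any trajectory exiting $B_{0,L}$ through a face other than $\partial^+B_{0,L}$ must therefore leave some interior $L_k$-\textit{Good} box otherwise than through its frontal face (if it never did, it would march forward layer by layer and exit through $\partial^+B_{0,L}$), so on $\mathcal G_L$
\[
P_{0,\omega}[X_{T_{B_{0,L}}}\notin\partial^+B_{0,L}]\le e^{-\eta_2 (N_0/4)^{k}}\le e^{-\eta_2 (4/N_0)(L/L_0)^{\beta}} .
\]
Combining the two displays, $P_0[X_{T_{B_{0,L}}}\notin\partial^+B_{0,L}]\le e^{-\tfrac{\eta_1}{4}(L/L_0)^{\alpha}}+e^{-\eta_2(4/N_0)(L/L_0)^{\beta}}$, and since $\alpha<\beta$ this is $\le e^{-cL^{\alpha}}$ for $L$ large, so \eqref{conTga} holds with $\gamma:=\alpha$ (hence with any smaller positive exponent).

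The genuinely delicate point, as opposed to the scaling arithmetic above, is the geometric bookkeeping of the third paragraph: one must verify that the frontal face of each interior \textit{Good} scale-$k$ box really lands in the starting slab of an adjacent scale-$k$ box still contained in $B_{0,L}$, deal with the constant-width annular region near $\partial B_{0,L}$ where scale-$k$ boxes protrude (so that the choice of $k$ has to leave a fixed number of buffer layers, which only affects the constants, not $\alpha$), and check that lateral exits through the transverse faces of $B_{0,L}$ — which force crossing $\gg L/L_k$ \textit{Good} boxes and are therefore even less likely — are controlled by the same mechanism. All of this parallels the covering combinatorics already carried out in \cite{BDR14, GVV19}, to which I refer for the details.
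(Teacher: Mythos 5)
Your proposal follows essentially the same route as the paper: pick $k$ so that $L_k\le L<L_{k+1}$, split the annealed exit probability over a good-environment event, control $\mathbb P[\mathcal G_L^c]$ by a polynomial-count union bound against the $e^{-\eta_1 2^k}$ decay of Proposition~\ref{propannebad}, and on the good event chain the quenched estimate of Proposition~\ref{propquengood} across the scale-$k$ boxes to reach $\partial^+B_{0,L}$, ending with the same $L^\gamma$ arithmetic with $\gamma$ a constant multiple of $\ln 2/\ln N_0$. The only small slips are cosmetic: the chaining should carry a polynomial prefactor (the paper writes $1-(1-e^{-\eta_2 v^k})^{N_0}\le N_0 e^{-\eta_2 v^k}$, while you drop the $N_0$), and the paper's good event is phrased via boxes with $\dot B_{1,k}(w)\subset B_{2,k+1}(0)$ rather than all boxes meeting $B_{0,L}$, but neither affects the stretched-exponential conclusion.
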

\begin{proof}
Since $(\mathfrak W_{c,M})|\ell$ holds for $M>1/\lambda_1$, we consider scales (\ref{scale1})-(\ref{scale3}) with $L_0=M$ and the renormalization construction provided by the successive blocks in $\mathfrak B_k$ with centres at points in the set $\mathfrak L_k$, with $k\geq0$.
We let 
$$
\gamma:=\ln(2)/(2\ln(N_0))\in (0,1)
$$ 
and consider for large $L$  the first integer $k>0$ such that $L_k\leq L$. We introduce the environment event $\mathfrak G_k$ of good boxes of scale $k$ intersecting $B_{0,L}$, defined by
\begin{align}
\label{goodenv}
\mathfrak G_k:= \big\{ & \forall B_{2,k}(w), \ w\in \mathfrak L_k,\\
\nonumber
&\dot B_{1,k}(w)\subset B_{2,k+1}(0)\Rightarrow B_{2,k}(w) \mbox{ is } L_k-\mbox{ \textit{Good}} \big\}
\end{align}
We then split the required expectation into two terms,
\begin{align}
\label{inesplitTG}
&P_0[X_{T_{B_{0,L}}}\notin \partial^+B_{0,L}]
\leq \mathbb E[\mathds 1_{\mathfrak G_k^c}]+\mathbb E[P_{0,\omega}[X_{T_{B_{0,L}}}\notin \partial^+B_{0,L}]\mathds 1_{\mathfrak G_k}].
\end{align}
Observe that using the Proposition \ref{propannebad}, the first expectation on the right hand side of (\ref{inesplitTG}) after a rough counting argument, can be bounded from above by
\begin{equation}\label{ine1fORTG}
\mathbb E[\mathds 1_{\mathfrak G_k^c}]\stackrel{\mbox{Remark \ref{remarkcover}}}\leq (N_0(2+(1/11))+2)(5\widetilde c\widetilde N_0+2)^{d-1}e^{-\eta_1 2^k}.
\end{equation}
On the other hand, we introduce a strategy encoded by the stopping times $(H^i)_{i\geq0}$ and the random position $(Z_i)_{i\geq0}$ together with $(Y_i)_{i\geq0}$ defined by
\begin{align}
\nonumber
&H^0=0,\  Z_0=X_0,\ Y_0=\textit{ an arbitary poin in }\{z\in\mathfrak L_k:\ Z_0\in \widetilde B_{1,k}(z) \},\\
\nonumber
&H^1=T_{B_{2,k+1}(0)}\wedge T_{B_{2,k}(Y_0)},\ Z_1=X_{H^1},\ Y_1=\textit{ an arbitary poin in }\\
\nonumber
&\{z\in\mathfrak L_k:\ Z_1\in \widetilde B_{1,k}(z) \}. \\
\nonumber
&\mbox{Moreover, we recursively define for integer $i>1$},\\
\nonumber
& H^i=H^{i-1}+H^1\circ \theta_{H^{i-1}},\  Z_i=X_{H^i},\ Y_i=\textit{ an arbitary poin in }\\
\label{stopstralat}
&\{z\in\mathfrak L_k:\ Z_i\in \widetilde B_{1,k}(z) \}.
\end{align}
We also introduce the $(\mathcal F_n)_{n\geq0}-$ stopping time $S$ defined by
$$
S=\inf\left\{n\geq 0:\ X_n \in \partial B_{2,k}(Y_0)\setminus \partial^+ B_{2,k}(Y_0)\right\}.
$$
Notice that the following claim
\begin{align}
\label{ine2fORTG}
&\mathbb E\left[P_{0,\omega}[X_{T_{B_{0,L}}}\notin\partial^+ B_{0,L}]\mathds 1_{\mathfrak G_k}\right] \\
\nonumber
&\leq 1-\mathbb E\left[P_{0,\omega}\left[\bigcap_{\substack{0\leq i< N_0}}\theta_i^{-1}\{H^1<S\}\right]\mathds 1_{\mathfrak G_k}\right],
\end{align}
holds. Indeed for large $L$ one has that $3\widetilde c \widetilde L_k(N_0-1)+4\widetilde c \widetilde L_k<2L^3$ (cf. (\ref{scale1})-(\ref{scale3})). Therefore, we have that $\mathbb P-$ a.s.
$$
P_{0,\omega}[X_{T_{B_{0,L}}}\in\partial^+ B_{0,L}]\geq P_{0,\omega}\left[\bigcap_{\substack{0\leq i< N_0}}\theta_i^{-1}\{H^1<S\}\right].
$$
As a result of Proposition \ref{propquengood} on inequality (\ref{ine2fORTG}) we see that
\begin{align}
\label{ine2forTG2}
&\mathbb E\left[P_{0,\omega}[X_{T_{B_{0,L}}}\notin\partial^+ B_{0,L}]\mathds 1_{\mathfrak G_k}\right]\\
\nonumber
&\leq 1-\left(1-e^{-\eta_2v^k}\right)^{N_0}\leq N_0e^{-\eta_2v^k}.
\end{align}
In view of applying (\ref{ine1fORTG}) and (\ref{ine2forTG2}) into (\ref{inesplitTG}), we find that
\begin{align*}
&P_0[X_{T_{B_{0,L}}}\notin \partial^+B_{0,L}]\leq 2(N_0(2+(1/11))+2)(5\widetilde c\widetilde N_0+2)^{d-1}e^{-\eta_1 2^k}\\
&\leq 2(N_0(2+(1/11))+2)(5\widetilde c\widetilde N_0+2)^{d-1}\exp\left(-\eta_1\left(\frac{L}{L_0}\right)^{\frac{\ln(2)}{2\ln(N_0)}}\right)=e^{-\eta_3L^{\gamma}},\\
\end{align*}
for certain constant $\eta_3:=\eta_3(L_0,d)>0$. The last inequality proves the claim in the theorem.
\end{proof}
\begin{proof}[Proof of Theorem \ref{mainth1}]
The proof of second part in Theorem \ref{mainth1} is concerned with a straightforward geometric argument and will be omitted.
We now conclude the proof of Theorem \ref{mainth1}. Observe that $(\mathfrak T^\gamma)|\ell$ plainly implies condition $(T^{\Gamma(N)})|\ell$ of \cite{GVV19}. Therefore Theorem 5.11 in \cite{GVV19} and the present Theorem \ref{mainth3} prove the equivalence between conditions $(\mathcal W_{c,M})|\ell$ and $(T')|\ell$. In the i.i.d. random environment case, we further apply the main result of \cite{GR18} to finish the proof of Theorem \ref{mainth1} for dimension $d\geq2$. The one dimensional case is explained in the next section.
\end{proof}

\medskip
\section{One dimensional finite argument: Proof of Corollary \ref{coronedim}}\label{seconedim}
In this section we will prove Corollary \ref{coronedim}. This result is well-known from the solution of the Poisson's equation as in Remark \ref{remarkonedim} or the one dimensional effective criterion. Nevertheless we display a new argument to show a possible new path which might be used to prove Conjecture \ref{conjec2} in higher dimensional case.
\begin{proof}[First Proof of Corollary \ref{coronedim}.]
In virtue of Proposition 2.6 in \cite{Sz01} the equivalence between transience along direction $e_1$ and condition $(T)|e_1$ was proven.  It is a simple matter to show
that arbitrary decay implies condition $(\mathcal W)|e_1$, since the boundary of the box $\widetilde B_1(c,M)$ are two points.
Therefore we only need to prove that $(\mathcal W)_{c,M}|e_1$ implies condition $(T)|e_1$.  To this end, we observe that Theorem \ref{mainth1} proves that $(\mathcal W)_{c,M}|e_1$ implies $(T')|e_1$. However $(T')|e_1$ implies transience along $e_1$ in any dimension, and as a result of Proposition 2.6 we have condition $(T)|e_1$.
\end{proof}
\begin{proof}[Second Proof of Corollary \ref{coronedim}]
Let $d$ be the dimension which will be essentially equals $1$.  We first assume that the random environment at site $0$ (and therefore at any site) takes finite many values $\omega_i(0, \cdot)\in\mathcal P_{\kappa}, \ i\in[1,m]$ on the simplex (\ref{simplex}) with probabilities $p_i\in (0,1),\ i\in[1,m]$, respectively. We consider the probability $\mu_m$ on $\mathcal P_\kappa$,
$$
\mu_m:=\sum_{i=1}^m p_i\mathds 1_{\{\omega(0, \cdot)=\omega_i(0, \cdot)\}}
$$
and the corresponding product measure $\mathbb P^m:=\mu_m^{\mathbb Z^d}$ on $\Omega$. Assume for the time being, the following claim:

\textit{For every finite environment as above, satisfying condition $(\mathcal W)_{c,M}|e_1$  there exists a constant $c>0$ not depending on $m$ such that for all large $L$ one has
\begin{equation}\label{claimoned}
P_0^m[X_{T_{U_L}}\notin \partial^+U_L]\leq e^{-cL},
\end{equation}
where $U_L$ and $\partial^+U_L$ are defined in the statement of Corollary \ref{coronedim} and $P_0^m:=\mathbb P^m\otimes P_{0,\omega}$.}
As the proof will show, the ballistic hypothesis above can actually be relaxed to the existence of $L_0>0$ satisfying
$$
P_0^m[X_{T_{U_{L_0}}}\notin \partial^+U_{L_0}]<1.
$$
We turn now to prove the Corollary starting from the claim. Let $L_0>0$ be a fixed number large enough such that for the original one-dimensional environment $\omega$ we have
\begin{equation}\label{probtranone}
P_0[X_{T_{U_{L_0}}}\notin \partial^+U_{L_0}]< \frac{\lambda_1}{2}.
\end{equation}
Notice that by definition $P_0[X_{T_{U_L}}\notin \partial^+U_L]=\mathbb E[P_{0,\omega}[X_{T_{U_L}}\notin \partial^+U_L]]$. Therefore, since the random variable inside is a function of the transitions in the slab, the big picture is to approximate each environment at each site for finite many sites depending on fixed $L_0$ and then make the approximation finer.

\noindent
For $n\in \mathbb N$, we consider \textit{finite approximations} $\omega_m(0,\cdot)$ \textit{of size} $m=n^d$ at size $0$ as follows (recall (\ref{simplex})):
\begin{align}
\nonumber
&\omega(\pm e_i,k):=k\frac{\kappa}{m}\mathds 1_{\{\omega(0,\pm e_i) \in [k\frac{\kappa}{m+1},(k+1)\frac{1-d\kappa}{m+1}]\}}, \mbox{   for $i\in[1,d]$, $k\in[1,m]$}\\
\nonumber
&c_{k_1,k_2,\ldots, k_{2d},m}^{-1}:=\sqrt{\sum_{k\in \{k_1,k_2,\ldots ,k_{2d} \}}\frac{k\kappa}{m}}\mbox{   for $k_i\in[1,m]$, $i\in[1,2d]$}\\
\nonumber
&\omega_m (0,\cdot):=\sum_{k_1,k_2,\ldots ,k_{2d}\in [1,m]}c_{k_1,\ldots,k_{2d},m}\omega(e_1,k_1)\omega(e_2,k_2)....\omega(-e_d, k_{2d}).
\end{align}
We denote by $\omega_m$ the environment generated in the product space by this \textit{environment at site} $0$. 
The random variable $P_{0,\omega}[X_{T_{U_L}}\notin \partial^+U_L]$ is certain function of the environment $\omega$ when restricted to $U_{L}$. Thus, since $L_0$ is fixed, we have that $\mathbb P-$ a.s.
$$
P_{0,\omega_m}[X_{T_{U_{L_0}}}\notin \partial^+U_{L_0}]\rightarrow P_{0,\omega}[X_{T_{U_{L_0}}}\notin \partial^+U_{L_0}]
$$
as $m\rightarrow\infty$. Above, the left hand side denote the random environmental function evaluated at the finite valued environment $\omega_m$. Then, we pick an $m$ large enough so that:
$$
\left|\mathbb E\left[P_{0,\omega_m}[X_{T_{U_{L_0}}}\notin \partial^+U_{L_0}]- P_{0,\omega}[X_{T_{U_{L_0}}}\notin \partial^+U_{L_0}]\right]\right|<\frac{\lambda_1}{2}.
$$
As a result, in virtue of (\ref{probtranone})we have that 
$$
\mathbb E[P_{0,\omega_m}[X_{T_{U_{L_0}}}\notin \partial^+U_{L_0}]]<\lambda_1.
$$
In turn, as a further result of applying (\ref{claimoned}) we get the result of Corollary \ref{coronedim}. 

\noindent
We now turn to prove inequality (\ref{claimoned}). We first observe that for $L=cL_0$ where $L_0>3\sqrt{d}$ and $c>1$ a large integer, on the event $\{X_{T_{U_{L}}}\notin \partial^+U_{L} \}$ the $(\mathcal F_n)_{n\geq0}-$ stopping time $\widetilde T_{-L}^{e_1}$ is $P_0^m-$ a.s finite, where $P_0^m:=\mathbb P^m\otimes P_{0,\omega}$ and the random time $\widetilde T_{-L}^{e_1}\circ \theta_{S_L^{e_1}:=\sup\{n\geq0: \ (X_n-X_0)\cdot e_1 \geq 0\}}+\widetilde T_{-L}^{e_1}\wedge \widetilde T_{L}^{e_1}$ is also $P_0^m-$ a.s finite.

\noindent
Therefore, we define the for $a\in\mathbb R$ the strip 
$$
\mathcal H_{a}:=\{z\in\mathbb Z^d:\  \exists z' \in \mathbb Z^d \ |z-z'|_1=1 (z\cdot e_1-aL_0)(z'\cdot e_1-aL_0)\leq 0  \},
$$
and a standard Markov chain estimate gives
\begin{align}
\nonumber
P_{0,\omega}[X_{T_{U_{L}}}\notin \partial^+U_{L}]\leq & \sum_{x\in \mathcal H_{-(c-1)}}P_{0,\omega}[\widetilde T_{-(c-1)L_0}^{e_1}< T_{L}^{e_1}, X_{\widetilde T_{-(c-1)L_0}^{e_1}}=x]\\
\label{firestonedquen}
&\times P_{x,\omega}[\widetilde T_{-L}^{e_1}<T_L^{e_1}, \  S \wedge \widetilde T_L^{e_1}<\infty].
\end{align}
In turn, for $x\in \mathcal H_{-(c-1)}$ we see that
\begin{align}\label{decsumoned}
P_{x,\omega}[\widetilde T_{L}^{e_1}<T_L^{e_1}, \  S \wedge \widetilde T_L^{e_1}<\infty]=\sum_{n\geq0}P_{x,\omega}[\widetilde T_{L}^{e_1}<T_L^{e_1}, \  S \wedge \widetilde T_L^{e_1}=n]
\end{align}
We introduce the cube $C(x,n)$ of size $n\in\mathbb N$ centered at $z\in \mathcal H_{-(c-1)}$, and its \textit{central boundary} $\partial^0 C(z,n)$ via
$$
C(z,n):=z+[-n,n]^d \mbox{   and   } \partial^0 C(z,n):=\mathcal H_{-(c-1)}\cap C(z,n).
$$
We now decompose according to the $m$ values of $\omega$ at each site of $C(x,n)$ the probability inside the sum in (\ref{decsumoned}). We denote the set of environmental configuration of $C(z,n)$ by $\mathcal W_{z,n}$, with the hopeful clear notation
\begin{align*}
\mathcal W_{z,n}:=\{&w=(\omega_{i_1}(x_1,\cdot),\omega_{i_2}(x_2,\cdot),\ldots, \omega_{i_{|C(z,n)|}}(x_{i_{|C(z,n)|}},\cdot) ),\\
& i_j\in[1,m], \ \cup_{j} x_{i_j}=C(z,n)\}.
\end{align*}
For $x\in \mathcal H_{-(c-1)}$ and integer $n\geq0$ we find that, 
\begin{align}
\nonumber
&P_{x,\omega}[\widetilde T_{-L}^{e_1}<T_L^{e_1}, \  S \wedge \widetilde T_{-L}^{e_1}=n]\\
\nonumber
&=\sum_{w\in\mathcal W_{x,n}}P_{x,\omega}[\widetilde T_{-L}^{e_1}<T_L^{e_1}, \  S \wedge \widetilde T_L^{e_1}=n, (\omega_x)_{x\in {C(x,n)}}= w]\\
\nonumber
&\leq\sum_{w\in\mathcal W_{x,n}, y \in \partial^0 C(x,n)}P_{x,\omega}[(\omega_x)_{x\in {C(x,n)}}= w,\ X_{S \wedge \widetilde T_{-L}^{e_1}}=y, S \wedge \widetilde T_L^{e_1}=n]\\
\label{inonedexp}
&\times P_{y,\omega}[(\omega_x)_{x\in {C(x,n)}}= w,\ \widetilde T_{-L}^{e_1}<\widetilde T_{-(c-1)L}^{e_1} ]
\end{align}
The crucial point is that calling $P_{s,x}$ the simple-symmetric random walk law starting from $x\in \mathbb Z^d$ we have that
\begin{align}\label{ineq1oned}
&sup_{w\in\mathcal W_{x,n}}P_{x,\omega}[(\omega_x)_{x\in {C(x,n)}}= w,\ X_{S \wedge \widetilde T_{-L}^{e_1}}=y, S \wedge \widetilde T_L^{e_1}=n]\\
\nonumber
&=P_{s,x}[X_{S \wedge \widetilde T_{-L}^{e_1}}=y, S \wedge \widetilde T_L^{e_1}=n],
\end{align}
along with for $y \in \mathbb Z^d$, denoting by $\mathbb E^m$ the expectation with respect to $\mathbb P^m$, we have 
\begin{align}\label{ineq2oned}
\mathbb E^m [\sum_{w\in\mathcal W_{x,n}} P_{y,\omega}[(\omega_x)_{x\in {C(x,n)}}= w,\ \widetilde T_{-L}^{e_1}<\widetilde T_{-(c-1)L}^{e_1}]]\leq \lambda_1 .
\end{align}
We also notice that for $x\in \mathcal H_{-(c-1)L_0}$ the function: 
$$
P_{0,\omega}[\widetilde T_{-(c-1)L_0}^{e_1}< T_{L}^{e_1}, X_{\widetilde T_{-(c-1)L_0}^{e_1}}=x]
$$
and for $y\in \partial^0 C(x,n)$: 
$$
\sum_{w\in\mathcal W_{x,n}} P_{y,\omega}[(\omega_x)_{x\in {C(x,n)}}= w,\ \widetilde T_{-L}^{e_1}<\widetilde T_{-(c-1)L}^{e_1}],
$$
are independent random variables under $\mathbb P^m$.

\noindent
Using the last remark, together with (\ref{ineq1oned}) and (\ref{ineq2oned}) in ((\ref{inonedexp}) ) and going back to (\ref{firestonedquen}) we obtain
\begin{align*}
&\mathbb E^m[P_{0,\omega}[X_{T_{U_{L}}}\notin \partial^+U_{L}]]\\
&=\mathbb E^m[P_{0,\omega}[\widetilde T_{-L}^{e_1}<T_{L}^{e_1}]]\leq \lambda_1 \mathbb E^m[P_{0,\omega}[\widetilde T_{-(c-1)L_0}^{e_1}<T_{L}^{e_1}]]
\end{align*}
and by a standard induction argument we get
$$
\mathbb E^m[P_{0,\omega}[X_{T_{U_{L}}}\notin \partial^+U_{L}]]\leq \lambda_1^{c}=e^{-\frac{\ln(1/\lambda)}{L_0}L}
$$ 
which ends the proof of claim (\ref{claimoned}).
\end{proof}
It is an open question to provide a replication of this argument in the multidimensional case and/or find a weaker ballisticity condition than the given in the 
present article. Indeed, it remains as a challenging question whether a class of multidimensional analogous bound to (\ref{ineq1oned}) might be proven.

\end{document}